\newtheorem{theorem}{Theorem}[section]
\newtheorem{lemma}[theorem]{Lemma}
\newtheorem{proposition}[theorem]{Proposition}
\theoremstyle{definition}
\newtheorem{definition}[theorem]{Definition}
\newtheorem{example}[theorem]{Example}
\theoremstyle{remark}
\newtheorem{remark}[theorem]{Remark}
\numberwithin{equation}{section}
\newcommand{\A}{\mathcal{A}}       
\newcommand{\M}{\mathcal{H}}       
\newcommand{\ip}[2]{\langle #1,\,#2\rangle}
\DeclareMathOperator{\id}{id}
\newcommand{\otimesA}{\otimes_\A} 
\begin{document}
	
	\setcounter{page}{1}
	
	\title[Continuous biframes in Hilbert $C^{\ast}-$modules]{Continuous biframes in Hilbert $C^{\ast}-$modules}
	
   \author[A. Lfounoune, A. Karara, M. Rossafi]{Abdellatif Lfounoune$^{1,*}$ Abdelilah Karara$^{1}$ and Mohamed Rossafi$^{2}$}
   
   	\dedicatory{This paper is dedicated to Professor Samir Kabbaj}
	
	\address{$^{1}$Laboratory Analysis, Geometry and Applications, University of Ibn Tofail, Kenitra, Morocco}
	\email{\textcolor[rgb]{0.00,0.00,0.84}{abdellatif.lfounoune@uit.ac.ma; abdelilah.karara@uit.ac.ma}}
	
	\address{$^{2}$Laboratory Analysis, Geometry and Applications, Higher School of Education and Training, University of Ibn Tofail, Kenitra, Morocco}
	\email{\textcolor[rgb]{0.00,0.00,0.84}{mohamed.rossafi@uit.ac.ma}}
	
	\subjclass[2020]{41A58; 42C15; 46L05; 47B90.}
	
	\keywords{Frame, continuous frame, biframe, tensor product, Hilbert $C^{\ast}-$modules.}
	
	\begin{abstract}
In this paper, we will introduce the concept of a continuous biframe for Hilbert $ C^{\ast}- $modules. Then, we examine some characterizations of this biframe with the help of an invertible and adjointable operator is given. Moreover, we study continuous biframe Bessel multiplier and dual continuous biframe in Hilbert $ C^{\ast}- $modules. Also, we develop the concept of continuous biframes in the tensor product of two Hilbert $C^{\ast}$-modules over a unital $C^{\ast}$-algebra $\mathcal{A}$ and provide some properties of invertible transformed biframes and Bessel multipliers in the tensor product.
\end{abstract}
\maketitle

\section{Introduction and preliminaries}
 
\smallskip\hspace{.6 cm} 
Frames for Hilbert spaces were introduced by Duffin and Schaefer \cite{Duf} in 1952 to study some deep problems in nonharmonic Fourier series by abstracting the fundamental notion of Gabor \cite{Gab} for signal processing. In fact, in $1946$ Gabor, showed that any function $f\in L^{2}(\mathbb{R})$ can be reconstructed via a Gabor system $\{g(x-ka)e^{2\pi imbx}: k,m\in\mathbb{Z}\}$ where $g$ is a continuous compact support function. These ideas did not generate much interest outside of nonharmonic Fourier series and signal processing until the landmark paper of Daubechies, Grossmannn, and Meyer \cite{DGM} in 1986, where they developed the class of tight frames for signal reconstruction and they showed that frames can be used to find series expansions of functions in $L^{2}(\mathbb{R})$ which are very similar to the expansions using orthonormal bases. After this innovative work the theory of frames began to be widely studied. While orthonormal bases have been widely used for many applications, it is the redundancy that makes frames useful in applications.

The idea of pair frames, which refers to a pair of sequences in a Hilbert space, was first presented in \cite{Fer}. Parizi, Alijani and Dehghan \cite{MF} studied Biframe, which is a generalization of controlled frame in Hilbert space. The concept of a frame is defined from a single sequence but to define a biframe we will need two sequences. In fact, the concept of biframe is a generalization of controlled frames and a special case of pair frames. For more detailed information on biframes theory, readers are recommended to consult: \cite{Assila, El Jazzar, Ghiati, Karara, Lfounoune, Lfounoune1, Massit, Echarghaoui, Rossafi1, Rossafi2, RFDCA, Kabbaj,  Chouchene, NhariRossTou, r1, r3, r5, r6}.
  
In this paper, we will introduce the concept of continuous biframes in Hilbert $C^{\ast}-$modules and we present some examples of this type of frame. Moreover, we investigate a characterization of continuous biframe using its frame operator is established. Also,  we study continuous biframe Bessel multiplier and dual continuous biframe in Hilbert $ C^{\ast}- $modules. Finally, we introduce the concept of continuous biframes in the tensor product of two Hilbert $C^{\ast}$-modules over a unital $C^{\ast}$-algebra $\mathcal{A}$ and investigate the properties of invertible transformed biframes and Bessel multipliers within this framework. 

Hilbert $C^{\ast}$-modules are generalizations of Hilbert spaces by allowing the inner product to take values in a $C^{\ast}$-algebra rather than in the field of complex
numbers.

Let's now review the definition of a Hilbert $C^{\ast}$-module with basic properties and some facts concerning operators on Hilbert $C^{\ast}$-module.

\begin{definition} \cite{Kal}
		Let $\mathcal{A}$ be a unital $ C^{\ast}- $ algebra and $ \mathcal{H} $ be a left $ \mathcal{A}-$ module, such that the linear structures of $ \mathcal{A} $ and $ \mathcal{H} $ are compatible. $ \mathcal{H} $ is a pre-Hilbert $ \mathcal{A} $ module if $ \mathcal{H} $ is equipped with an $ \mathcal{A}- $valued inner product $ \langle \cdot, \cdot\rangle_{\mathcal{A}} : \mathcal{H} \times \mathcal{H}\rightarrow \mathcal{A}$ such that is sesquilinear, positive definite and respects the module action. In the other words,
		\begin{itemize}
			\item [1]- $\langle x, x\rangle_{\mathcal{A}} \geq 0$,    $ \forall x\in \mathcal{H}$ and $\langle x, x\rangle_{\mathcal{A}} = 0$ if and only if $ x=0 $.
			\item [2]- $\langle ax+y, z\rangle_{\mathcal{A}} =a\langle x,z\rangle_{\mathcal{A}}+\langle y,z\rangle_{\mathcal{A}} $ for all $ a\in \mathcal{A} $ and $ x,y,z \in \mathcal{H}. $
		\item[3]- $ \langle x,y \rangle_{\mathcal{A}} =  \langle y,x \rangle_{\mathcal{A}}^{\ast}$ for all $ x,y \in\mathcal{H} $.
		\end{itemize}
	For $ x\in\mathcal{H} $, we define $\Vert x\Vert = \Vert \langle x,x \rangle_{\mathcal{A}} \Vert^{\frac{1}{2}}$. If $ \mathcal{H} $ is complete with $ \Vert .\Vert $, it is called a Hilbert $ \mathcal{A}- $module or a Hilbert $ C^{\ast} -$module over $ \mathcal{A} $. For every $a$ in $ C^{\ast}- $algebra $ \mathcal{A} $, we have  $ \vert a \vert = (a^{\ast}a)^{\frac{1}{2}} $ and the $ \mathcal{A}- $valued norm on $ \mathcal{H} $ is defined by $ \vert x\vert= \langle x,x\rangle_{\mathcal{A}}^{\frac{1}{2}} $ for $ x\in\mathcal{H} $.
	\end{definition}
	\begin{lemma} \cite{Pas}
	Let $\mathcal{H}$ be a Hilbert $\mathcal{A}$-module. If $\mathcal{T}\in End_{\mathcal{A}}^{\ast}(\mathcal{H})$, then $$\langle \mathcal{T}x,\mathcal{T}x\rangle_{\mathcal{A}}\leq\|\mathcal{T}\|^{2}\langle x,x\rangle_{\mathcal{A}},\quad \forall x\in\mathcal{H}.$$
\end{lemma}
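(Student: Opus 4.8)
The plan is to reduce the $\mathcal{A}$-valued inequality to a positivity statement inside the $C^{\ast}$-algebra $End_{\mathcal{A}}^{\ast}(\mathcal{H})$ of adjointable operators on $\mathcal{H}$. First I would rewrite the left-hand side by moving one factor of $\mathcal{T}$ across the inner product: since $\mathcal{T}$ is adjointable, $\langle \mathcal{T}x,\mathcal{T}x\rangle_{\mathcal{A}}=\langle \mathcal{T}^{\ast}\mathcal{T}x,x\rangle_{\mathcal{A}}$ for every $x\in\mathcal{H}$. This concentrates the whole problem on the single positive operator $S:=\mathcal{T}^{\ast}\mathcal{T}$, so that the claim becomes $\langle Sx,x\rangle_{\mathcal{A}}\leq \|\mathcal{T}\|^{2}\langle x,x\rangle_{\mathcal{A}}$.

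Next I would invoke the $C^{\ast}$-algebra structure of $B:=End_{\mathcal{A}}^{\ast}(\mathcal{H})$. In $B$ the element $S=\mathcal{T}^{\ast}\mathcal{T}$ is positive, and the $C^{\ast}$-identity gives $\|S\|=\|\mathcal{T}\|^{2}$. A standard fact about positive elements of any $C^{\ast}$-algebra is that $a\leq\|a\|\cdot 1$; applying it to $S$ yields the operator inequality $S\leq\|\mathcal{T}\|^{2}\,\mathrm{id}$, which is to say that $R:=\|\mathcal{T}\|^{2}\,\mathrm{id}-S$ is a positive element of $B$.

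It then remains to transfer this operator positivity into a statement about the $\mathcal{A}$-valued inner product. Because $R\geq 0$ in the $C^{\ast}$-algebra $B$, the continuous functional calculus furnishes a self-adjoint square root $R^{1/2}\in B$ with $R=(R^{1/2})^{\ast}R^{1/2}$, and hence $\langle Rx,x\rangle_{\mathcal{A}}=\langle R^{1/2}x,R^{1/2}x\rangle_{\mathcal{A}}\geq 0$ for all $x$ by positive-definiteness of the inner product. Writing out $\langle Rx,x\rangle_{\mathcal{A}}=\|\mathcal{T}\|^{2}\langle x,x\rangle_{\mathcal{A}}-\langle Sx,x\rangle_{\mathcal{A}}$ and combining with the first step produces the desired inequality.

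The routine parts are the algebraic rewriting in the first step and the quotation of $a\leq\|a\|\cdot 1$ for positive $a$. The step deserving the most care, and the main conceptual obstacle, is the last one: one must be certain that positivity of $R$ as an operator, i.e. in the order of $B$, genuinely forces $\langle Rx,x\rangle_{\mathcal{A}}\geq 0$ in the order of $\mathcal{A}$. This is precisely what the existence of the adjointable square root $R^{1/2}$ secures, and it is the only point at which one leaves pure $C^{\ast}$-algebra and uses the module inner product itself.
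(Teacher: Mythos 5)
Your proof is correct, and it coincides with the standard argument behind this lemma: the paper itself gives no proof (the statement is quoted from Paschke \cite{Pas}), and your route --- rewriting $\langle \mathcal{T}x,\mathcal{T}x\rangle_{\mathcal{A}}=\langle \mathcal{T}^{\ast}\mathcal{T}x,x\rangle_{\mathcal{A}}$, noting $\mathcal{T}^{\ast}\mathcal{T}\leq\|\mathcal{T}\|^{2}\,\mathrm{id}$ as a positive element of the unital $C^{\ast}$-algebra $End_{\mathcal{A}}^{\ast}(\mathcal{H})$, and transferring this to the $\mathcal{A}$-valued inequality via the self-adjoint square root $R^{1/2}$ of $R=\|\mathcal{T}\|^{2}\,\mathrm{id}-\mathcal{T}^{\ast}\mathcal{T}$ --- is exactly the proof found in Paschke and in Lance's book. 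You also correctly identify the one delicate point, namely that positivity of $R$ in the operator algebra forces $\langle Rx,x\rangle_{\mathcal{A}}\geq 0$ in $\mathcal{A}$, which the factorization $\langle Rx,x\rangle_{\mathcal{A}}=\langle R^{1/2}x,R^{1/2}x\rangle_{\mathcal{A}}$ settles.
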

Let $\mathcal{F}$ denote a Banach space, and $(\Omega, \mu)$ represent a measure space  with
 positive measure $\mu$, with $f: \Omega \rightarrow \mathcal{F}$ being a measurable function.  Integral of the Banach-valued function $f$ has been defined by Bochner and others. Most properties of this integral are similar to those of the integral of real-valued functions. Because every $C^{\ast}$-algebra and Hilbert $C^{\ast}$-module is a Banach space thus we can use this integral and its properties.

 Throughout, we assume $\mathcal{A}$ is a unital $C^{\ast}$-algebra, $\mathcal{H}$ is a Hilbert $C^{\ast}$-module over $\mathcal{A}$, and $(\Omega, \mu)$ is a measure space. Define,
$$
L^2(\Omega, \mathcal{A})=\left\{\varphi: \Omega \rightarrow \mathcal{A} \quad ; \quad\left\|\int_{\Omega}\left|(\varphi(\omega))^*\right|^2 d \mu(\omega)\right\|<\infty\right\} .
$$
For every $\varphi, \psi \in L^2(\Omega, \mathcal{A})$, if the inner product is defined by 
$$
\langle\varphi, \psi\rangle_{\mathcal{A}}=\int_{\Omega}\langle\varphi(\omega), \psi(\omega)\rangle_{\mathcal{A}} d \mu(\omega).
$$
The norm is defined by $\|\varphi\|=\|\langle\varphi, \varphi\rangle_{\mathcal{A}}\|^{\frac{1}{2}}$, then $L^2(\Omega, \mathcal{A})$ is a Hilbert $\mathcal{A}$-module \cite{Lance}.
\begin{definition}
Let $\mathcal{H}$ be Hilbert $\mathcal{A}$-module and $(\Omega, \mu)$ be a measure space with positive measure $\mu$. A mapping $\mathcal{X} : \Omega \to \mathcal{H}$ is called a continuous frame with respect to $\left(\Omega, \mu\right)$ if
\begin{itemize}
\item[$(i)$] $\mathcal{X}$ is weakly-measurable, i.e., for all $f \in \mathcal{H}$, $\omega \mapsto \langle  f, \mathcal{X}(\omega)\rangle_{\mathcal{A}} $ is a measurable function on $\Omega$,
\item[$(ii)$]There exist constants $0 < A \leq B < \infty$ such that
\end{itemize}
\[A \langle f, f\rangle_{\mathcal{A}} \leq \int_{\Omega}\langle f, \mathcal{X}(\omega)\rangle_{\mathcal{A}} \langle \mathcal{X}(\omega), f\rangle_{\mathcal{A}} d\mu \leq B\langle f, f\rangle_{\mathcal{A}},\]
for all $f \in \mathcal{H}$. 
\end{definition}
The constants $A$ and $B$ are called continuous frame bounds. If $A = B$, then it is called a tight continuous frame. If the mapping $\mathcal{X}$ satisfies only the right inequality, then it is called continuous Bessel mapping with Bessel bound $B$.

Let $\mathcal{X} : \Omega \to \mathcal{H}$ be a continuous frame for $\mathcal{H}$. Then the synthesis operator $\mathcal{T}_{\mathcal{X}} : L^{2}\left(\Omega,\mu\right) \to \mathcal{H}$ weakly defined by
\[\langle  \mathcal{T}_{\mathcal{X}}(\varphi), h\rangle_{\mathcal{A}}  = \int_{\Omega}\varphi(\omega)\langle  \mathcal{X}(\omega), f\rangle_{\mathcal{A}} d\mu,\]where $\varphi \in L^{2}\left(\Omega,\mu\right)$ and $f \in \mathcal{H}$ and its adjoint operator called the analysis operator $\mathcal{T}^{\ast}_{\mathcal{X}} : \mathcal{H} \to L^{2}\left(\Omega,\mu\right)$ is given by  
\[\mathcal{T}^{\ast}_{\mathcal{X}}\mathcal{X}(\omega) = \langle  f, \mathcal{X}(\omega)\rangle_{\mathcal{A}} \;,\; f \in \mathcal{H},\;\; \omega \in \Omega.\]

The frame operator $S_{\mathcal{X}} : \mathcal{H} \to \mathcal{H}$ is weakly defined by
\[\langle  S_{\mathcal{X}}f, f\rangle_{\mathcal{A}}  = \int_{\Omega}\langle  f, \mathcal{X}(\omega)\rangle_{\mathcal{A}} \langle  \mathcal{X}(\omega), f\rangle_{\mathcal{A}} d\mu, \;\forall f \in \mathcal{H}.\]

Let $GL^{+}(\mathcal{H})$ be the set of all positive bounded linear invertible operators on $\mathcal{H}$ with bounded
inverse. We reserve the notation $\operatorname{End}_\mathcal{A}^*(\mathcal{H})$ for the set of all adjointable operators from $\mathcal{H}$ to $\mathcal{H}$.

\section{Continuous biframe in Hilbert $C^{\ast}$-modules}

In this section, we begin by presenting the definition of a continuous biframe in a Hilbert $C^{\ast}-$modules, followed by a discussion of some of its properties. 

\begin{definition}\label{def1.01}
A pair $(\mathcal{X}, \mathcal{Y}) = \left(\mathcal{X} : \Omega \to \mathcal{H},\; \mathcal{Y} : \Omega \to \mathcal{H}\right)$ of mappings is called a continuous biframe for $\mathcal{H}$ with respect to $\left(\Omega, \mu\right)$ if:
\begin{itemize}
\item[$(i)$] $\mathcal{X}, \mathcal{Y}$ are weakly-measurable, i.e., for all $f \in \mathcal{H}$, $\omega \mapsto \langle   f, \mathcal{X}(\omega)\rangle_{\mathcal{A}} $ and $\omega \mapsto \langle  f, \mathcal{Y}(\omega)\rangle_{\mathcal{A}} $ are measurable functions on $\Omega$,
\item[$(ii)$]there exist constants $0 < A \leq B < \infty$ such that for all $f \in \mathcal{H}$,
\end{itemize}
\begin{align}
A\langle f, f\rangle_{\mathcal{A}} \leq \int_{\Omega}\langle  f, \mathcal{X}(\omega)\rangle_{\mathcal{A}} \langle  \mathcal{Y}(\omega), f\rangle_{\mathcal{A}} d\mu \leq B\langle f, f\rangle_{\mathcal{A}}.\label{3.eqq3.11}
\end{align}
The constants $A$ and $B$ are called continuous biframe bounds. If $A = B$, then it is called a tight continuous biframe and if $A = B = 1$, then it is called Parseval continuous biframe . 

If $(\mathcal{X}, \mathcal{Y})$ satisfies only the right inequality (\ref{3.eqq3.11}), then it is called continuous biframe Bessel mapping with Bessel bound $B$. 
\end{definition}
\begin{remark}
Let $\mathcal{X} : \Omega \to \mathcal{H}$ be a mapping. Consequently, in light of the Definition \ref{def1.01}, we express that
\begin{itemize}
\item[$(i)$]If $(\mathcal{X}, \mathcal{X})$ is a continuous biframe for $\mathcal{H}$, then $\mathcal{X}$ is a continuous frame for $\mathcal{H}$.
\item[$(ii)$]If $P \in GL^{+}(\mathcal{H})$, $(\mathcal{X}, P\mathcal{X})$ is a continuous biframe for $\mathcal{H}$, then $\mathcal{X}$ is a $P$--controlled continuous frame for $\mathcal{H}$,
\item[$(iii)$]If $P, Q \in GL^{+}(\mathcal{H})$, $(P\mathcal{X}, Q\mathcal{X})$ is a continuous biframe for $\mathcal{H}$, then $\mathcal{X}$ is a $(P, Q)$--controlled continuous frame for $\mathcal{H}$.  
\end{itemize} 
\end{remark}
We now provide an example that verifies the description given above.
\begin{example}  Assume that $\mathcal{A}=\left\{\left(\begin{array}{ll}a & 0 \\ 0 & b\end{array}\right): a, b \in \mathbb{C}\right\}$, then $\mathcal{A}$ is a unital $C^*$-algebra. Also $\mathcal{A}$ is a Hilbert $C^*$-module over itself, with the inner product:
$$
\begin{aligned}
\langle., .\rangle_{\mathcal{A}}: \mathcal{A} \times \mathcal{A} & \rightarrow \mathcal{A} \\
(M, N) & \longmapsto M(\bar{N})^t .
\end{aligned}
$$
Assume that $(\Omega, \mu)$ is a measure space where $\Omega=[0,1]$ and $\mu$ is the Lebesgue measure. Define $\mathcal{X} : \Omega \to \mathcal{A}$ by
\[
\mathcal{X}(\omega) = 
\begin{pmatrix}
\;2 \omega & 0\\
\;0           &   1-\omega\\
\end{pmatrix}
,\; \omega \in \Omega
,\] 
and $\mathcal{Y} : \Omega \to \mathcal{A}$ by
\[
\mathcal{Y}(\omega) = 
\begin{pmatrix}
\;3\omega & 0\\
\;0           & \omega+1\\
\end{pmatrix}
,\; \omega \in \Omega.
\]
For every $f=\left(\begin{array}{ll}a & 0 \\ 0 & b\end{array}\right) \in \mathcal{A}$, we have
\begin{small}
$$
\begin{aligned}
\int_{\Omega}\langle f, \mathcal{X}(\omega)\rangle_{\mathcal{A}}\langle \mathcal{Y}(\omega), f\rangle_{\mathcal{A}} d \mu(\omega) & =\int_{[0,1]}\left\langle\left(\begin{array}{cc}
a & 0 \\
0 & b
\end{array}\right),\left(\begin{array}{cc}
2 \omega & 0 \\
0 & 1-\omega
\end{array}\right)\right\rangle_{\mathcal{A}}\left\langle\left(\begin{array}{cc}
3 \omega & 0 \\
0 & \omega+1
\end{array}\right),\left(\begin{array}{ll}
a & 0 \\
0 & b
\end{array}\right)\right\rangle_{\mathcal{A}} d \mu(\omega) \\
& =\int_{[0,1]}\left(\begin{array}{cc}
2 \omega a & 0 \\
0 & (1-\omega) b
\end{array}\right)\left(\begin{array}{cc}
3 \omega \bar{a} & 0 \\
0 & (\omega+1) \bar{b}
\end{array}\right) d \mu(\omega) \\
& =\int_{[0,1]}\left(\begin{array}{cc}
6 \omega^2 & 0 \\
0 & 1-\omega^2
\end{array}\right)\left(\begin{array}{cc}
|a|^2 & 0 \\
0 & |b|^2
\end{array}\right) d \mu(\omega) \\
& =\left(\begin{array}{cc}
|a|^2 & 0 \\
0 & |b|^2
\end{array}\right) \int_{[0,1]}\left(\begin{array}{cc}
6 \omega^2 & 0 \\
0 & 1-\omega^2
\end{array}\right) d \mu(\omega) \\
& =\left(\begin{array}{cc}
2 & 0 \\
0 & \frac{2}{3}
\end{array}\right)\left(\begin{array}{cc}
|a|^2 & 0 \\
0 & |b|^2
\end{array}\right)
\end{aligned}
$$
\end{small}

Consequently,
 $$ \dfrac{2}{3}\langle f, f\rangle_{\mathcal{A}} \leq \int_{\Omega}\langle  f, \mathcal{X}(\omega)\rangle_{\mathcal{A}} \langle  \mathcal{Y}(\omega),f \rangle_{\mathcal{A}} d\mu \leq 2 \langle f, f\rangle_{\mathcal{A}} .$$
Therefore, $(\mathcal{X}, \mathcal{Y})$ is a continuous biframe for $\mathcal{H}$ with bound $\dfrac{2}{3}$ and $2$. 
\end{example}

Next, we introduce the continuous biframe operator and provide some of its properties.
\begin{definition}
Let $(\mathcal{X}, \mathcal{Y}) = \left(\mathcal{X} : \Omega \to \mathcal{H},\; \mathcal{Y} : \Omega \to \mathcal{H}\right)$ be a continuous biframe for $\mathcal{H}$ with respect to $\left(\Omega, \mu\right)$. Then the continuous biframe operator $S_{\mathcal{X}, \mathcal{Y}} : \mathcal{H} \to \mathcal{H}$ is defined by
\[S_{\mathcal{X}, \mathcal{Y}}f = \int_{\Omega}\langle  f, \mathcal{X}(\omega)\rangle_{\mathcal{A}} \mathcal{Y}(\omega)d\mu,\]
for all $f \in \mathcal{H}$.
\end{definition}

For every $f \in \mathcal{H}$, we have 
\begin{align}
\langle  S_{\mathcal{X}, \mathcal{Y}}f, f\rangle_{\mathcal{A}}  = \int_{\Omega}\langle  f, \mathcal{X}(\omega)\rangle_{\mathcal{A}} \langle  \mathcal{Y}(\omega), f\rangle_{\mathcal{A}} d\mu.\label{3.eqq3.12}
\end{align}
This implies that, for each $f \in \mathcal{H}$,
\[A\langle f, f\rangle_{\mathcal{A}} \leq \langle  S_{\mathcal{X}, \mathcal{Y}}f, f\rangle_{\mathcal{A}}  \leq  B\langle f, f\rangle_{\mathcal{A}}.\]
Hence, $AI \leq S_{\mathcal{X}, \mathcal{Y}} \leq BI$, where $I$ is the identity operator on $\mathcal{H}$. consequently, $S_{\mathcal{X}, \mathcal{Y}}$ is positive and invertible.

\begin{theorem}\label{t36}
Let $S_{\mathcal{X}, \mathcal{Y}}$ the continuous biframe operator, if the pair $(\mathcal{X}, \mathcal{Y})$ is a continuous biframe for $\mathcal{H}$ with respect to $\left(\Omega, \mu\right)$ Then $S_{\mathcal{X}, \mathcal{Y}}$ is adjointable and $S^{\ast}_{\mathcal{X}, \mathcal{Y}}=S_{\mathcal{Y}, \mathcal{X}}$.
\end{theorem}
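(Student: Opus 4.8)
The plan is to verify directly that $S_{\mathcal{Y},\mathcal{X}}$ serves as an adjoint for $S_{\mathcal{X},\mathcal{Y}}$; that is, to establish the identity
\[
\langle S_{\mathcal{X},\mathcal{Y}} f, g\rangle_{\mathcal{A}} = \langle f, S_{\mathcal{Y},\mathcal{X}} g\rangle_{\mathcal{A}} \qquad \text{for all } f, g \in \mathcal{H}.
\]
Once this holds for every pair $f,g$, the operator $S_{\mathcal{X},\mathcal{Y}}$ is by definition adjointable and its adjoint is exactly $S_{\mathcal{Y},\mathcal{X}}$, so both conclusions follow at once. Note that both operators are already known to be well defined and bounded, since the Bessel (upper) bound in Definition \ref{def1.01} guarantees convergence of the relevant integrals.

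First I would expand the left-hand side. Starting from $S_{\mathcal{X},\mathcal{Y}} f = \int_{\Omega}\langle f, \mathcal{X}(\omega)\rangle_{\mathcal{A}}\,\mathcal{Y}(\omega)\,d\mu$ and pairing with $g$, I would pull the map $\langle \cdot\,, g\rangle_{\mathcal{A}}$ inside the Bochner integral and then apply axiom (2) (linearity of the inner product in its first slot over the left $\mathcal{A}$-action, with scalar $a = \langle f, \mathcal{X}(\omega)\rangle_{\mathcal{A}}$) to obtain
\[
\langle S_{\mathcal{X},\mathcal{Y}} f, g\rangle_{\mathcal{A}} = \int_{\Omega}\langle f, \mathcal{X}(\omega)\rangle_{\mathcal{A}}\,\langle \mathcal{Y}(\omega), g\rangle_{\mathcal{A}}\,d\mu .
\]
For the right-hand side I would expand $\langle f, S_{\mathcal{Y},\mathcal{X}} g\rangle_{\mathcal{A}}$ with $S_{\mathcal{Y},\mathcal{X}} g = \int_{\Omega}\langle g, \mathcal{Y}(\omega)\rangle_{\mathcal{A}}\,\mathcal{X}(\omega)\,d\mu$, using axiom (3) in the form $\langle f, h\rangle_{\mathcal{A}} = \langle h, f\rangle_{\mathcal{A}}^{*}$ to move the integral out of the second slot, and then the involution together with axiom (2). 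Each integrand becomes $\langle f, \mathcal{X}(\omega)\rangle_{\mathcal{A}}\,\langle g, \mathcal{Y}(\omega)\rangle_{\mathcal{A}}^{*} = \langle f, \mathcal{X}(\omega)\rangle_{\mathcal{A}}\,\langle \mathcal{Y}(\omega), g\rangle_{\mathcal{A}}$, which coincides with the integrand found for the left-hand side. Comparing the two displays yields the desired identity.

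The main obstacle is justifying the interchange of the $\mathcal{A}$-valued inner product with the Bochner integral, both when pulling $\langle \cdot\,, g\rangle_{\mathcal{A}}$ inside and when pulling $\langle f,\cdot\rangle_{\mathcal{A}}$ inside. This is legitimate because, for fixed $f$ or $g$, these maps are bounded $\mathbb{C}$-linear (respectively conjugate-linear) maps $\mathcal{H}\to\mathcal{A}$, and the Bochner integral commutes with bounded linear maps; the convergence of every integral in sight is supplied by the upper frame bound. Once this interchange is granted, the remainder is a routine bookkeeping of the module axioms (2), (3) and the $C^{*}$-involution, and the equality of the two integrands closes the argument.
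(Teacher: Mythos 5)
Your proof is correct, and it actually closes a small gap that the paper's own argument leaves open. The paper's proof works only with the \emph{diagonal} quantity: it shows $\langle S_{\mathcal{X},\mathcal{Y}}f, f\rangle_{\mathcal{A}} = \langle f, S_{\mathcal{Y},\mathcal{X}}f\rangle_{\mathcal{A}}$ for every single $f\in\mathcal{H}$, by inserting the integral form of $S_{\mathcal{X},\mathcal{Y}}f$, taking the $C^{*}$-involution of the integrand, and recognizing $\langle S_{\mathcal{Y},\mathcal{X}}f,f\rangle_{\mathcal{A}}^{\ast}$. Strictly speaking, adjointability demands the identity $\langle S_{\mathcal{X},\mathcal{Y}}f, g\rangle_{\mathcal{A}} = \langle f, S_{\mathcal{Y},\mathcal{X}}g\rangle_{\mathcal{A}}$ for \emph{all pairs} $f,g$, so the paper's conclusion implicitly relies on a polarization argument (valid in Hilbert $C^{*}$-modules, but never stated). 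You instead verify the off-diagonal identity directly for arbitrary $f$ and $g$: expand both sides through the Bochner integral, use the module axioms (2) and (3) and the involution, and observe that both integrands equal $\langle f, \mathcal{X}(\omega)\rangle_{\mathcal{A}}\langle \mathcal{Y}(\omega), g\rangle_{\mathcal{A}}$. The computational ingredients are the same as the paper's (interchange of the $\mathcal{A}$-valued inner product with the Bochner integral, justified by boundedness of $h\mapsto\langle h,g\rangle_{\mathcal{A}}$, plus the $C^{*}$-involution), but your two-vector version is exactly what the definition of adjointable requires, so your write-up is the more complete of the two. One caveat you inherit from the paper rather than create: the claim that the upper biframe bound alone guarantees Bochner convergence of $\int_{\Omega}\langle f,\mathcal{X}(\omega)\rangle_{\mathcal{A}}\,\mathcal{Y}(\omega)\,d\mu$ is not automatic, since the biframe inequality controls only the mixed quantity $\int_{\Omega}\langle f,\mathcal{X}(\omega)\rangle_{\mathcal{A}}\langle\mathcal{Y}(\omega),f\rangle_{\mathcal{A}}\,d\mu$ and not $\mathcal{X}$ or $\mathcal{Y}$ separately; the paper simply builds well-definedness of $S_{\mathcal{X},\mathcal{Y}}$ into its definition, and your proof is fine under that same convention.
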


\begin{proof}

From (\ref{3.eqq3.12}), we can write
\begin{align*}
 \langle  S_{\mathcal{X}, \mathcal{Y}}f, f\rangle_{\mathcal{A}} &= \int_{\Omega}\langle  f, \mathcal{X}(\omega)\rangle_{\mathcal{A}} \langle  \mathcal{Y}(\omega), f\rangle_{\mathcal{A}} d\mu  \\    
&=\langle \int_{\Omega}\langle  f, \mathcal{X}(\omega)\rangle_{\mathcal{A}} \mathcal{Y}(\omega)d\mu , f\rangle_{\mathcal{A}}\\
&= \int_{\Omega}\langle  f, \mathcal{X}(\omega)\rangle_{\mathcal{A}} \langle  \mathcal{Y}(\omega), f\rangle_{\mathcal{A}} d\mu\\
&= \int_{\Omega}\left(\langle  f, \mathcal{Y}(\omega)\rangle_{\mathcal{A}} \langle  \mathcal{X}(\omega), f\rangle_{\mathcal{A}}\right)^{\ast} d\mu\\
&= \left(\int_{\Omega}\langle  f, \mathcal{Y}(\omega)\rangle_{\mathcal{A}} \langle  \mathcal{X}(\omega), f\rangle_{\mathcal{A}} d\mu\right)^{\ast}
\\
&= \langle  S_{\mathcal{Y}, \mathcal{X}}f, f\rangle_{\mathcal{A}}^{\ast} \\
&=\langle  f,S_{\mathcal{Y}, \mathcal{X}} f\rangle_{\mathcal{A}}
.
\end{align*}
Therefore, $S_{\mathcal{X}, \mathcal{Y}}$ is adjointable and $S^{\ast}_{\mathcal{X}, \mathcal{Y}}=S_{\mathcal{Y}, \mathcal{X}}$.
\end{proof}
\begin{proposition}
Let $S_{\mathcal{X}, \mathcal{Y}}$ and $S_{\mathcal{Y}, \mathcal{X}}$ be continuous biframe operators such that $S_{\mathcal{X}, \mathcal{Y}}=S_{\mathcal{Y}, \mathcal{X}}$. Then the pair $(\mathcal{X}, \mathcal{Y})$ is a continuous biframe for $\mathcal{H}$ with respect to $\left(\Omega, \mu\right)$ if and only if  $(\mathcal{Y}, \mathcal{X})$ is a continuous biframe for $\mathcal{H}$ with respect to $\left(\Omega, \mu\right)$.
\end{proposition}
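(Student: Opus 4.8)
The plan is to reduce everything to the operator identity encoded by equation (\ref{3.eqq3.12}): a pair is a continuous biframe precisely when its biframe operator satisfies $A\langle f,f\rangle_{\mathcal{A}} \leq \langle S f, f\rangle_{\mathcal{A}} \leq B\langle f,f\rangle_{\mathcal{A}}$ for all $f \in \mathcal{H}$, together with the weak-measurability of the two defining mappings. Since the hypothesis $S_{\mathcal{X}, \mathcal{Y}} = S_{\mathcal{Y}, \mathcal{X}}$ forces the two quadratic forms $f \mapsto \langle S_{\mathcal{X}, \mathcal{Y}} f, f\rangle_{\mathcal{A}}$ and $f \mapsto \langle S_{\mathcal{Y}, \mathcal{X}} f, f\rangle_{\mathcal{A}}$ to coincide, the frame inequalities that characterize $(\mathcal{X}, \mathcal{Y})$ will translate verbatim into those characterizing $(\mathcal{Y}, \mathcal{X})$.

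First I would record that, by (\ref{3.eqq3.12}) applied to each ordered pair, for every $f \in \mathcal{H}$ one has $\langle S_{\mathcal{X}, \mathcal{Y}} f, f\rangle_{\mathcal{A}} = \int_{\Omega}\langle f, \mathcal{X}(\omega)\rangle_{\mathcal{A}}\langle \mathcal{Y}(\omega), f\rangle_{\mathcal{A}} d\mu$ and $\langle S_{\mathcal{Y}, \mathcal{X}} f, f\rangle_{\mathcal{A}} = \int_{\Omega}\langle f, \mathcal{Y}(\omega)\rangle_{\mathcal{A}}\langle \mathcal{X}(\omega), f\rangle_{\mathcal{A}} d\mu$. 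The assumption $S_{\mathcal{X}, \mathcal{Y}} = S_{\mathcal{Y}, \mathcal{X}}$ then yields $\langle S_{\mathcal{X}, \mathcal{Y}} f, f\rangle_{\mathcal{A}} = \langle S_{\mathcal{Y}, \mathcal{X}} f, f\rangle_{\mathcal{A}}$, so the two integrals agree for every $f$.

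For the forward implication I would assume $(\mathcal{X}, \mathcal{Y})$ is a continuous biframe with bounds $A, B$, so that $A\langle f, f\rangle_{\mathcal{A}} \leq \int_{\Omega}\langle f, \mathcal{X}(\omega)\rangle_{\mathcal{A}}\langle \mathcal{Y}(\omega), f\rangle_{\mathcal{A}} d\mu \leq B\langle f, f\rangle_{\mathcal{A}}$ for all $f$. Replacing the middle integral by the equal one from the previous step immediately gives $A\langle f, f\rangle_{\mathcal{A}} \leq \int_{\Omega}\langle f, \mathcal{Y}(\omega)\rangle_{\mathcal{A}}\langle \mathcal{X}(\omega), f\rangle_{\mathcal{A}} d\mu \leq B\langle f, f\rangle_{\mathcal{A}}$, which is exactly inequality (\ref{3.eqq3.11}) for the pair $(\mathcal{Y}, \mathcal{X})$ with the same bounds. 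The weak-measurability requirement for $(\mathcal{Y}, \mathcal{X})$ coincides with that for $(\mathcal{X}, \mathcal{Y})$ and is therefore already available, so $(\mathcal{Y}, \mathcal{X})$ is a continuous biframe.

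The converse needs no new work: the hypothesis $S_{\mathcal{X}, \mathcal{Y}} = S_{\mathcal{Y}, \mathcal{X}}$ is symmetric under interchanging $\mathcal{X}$ and $\mathcal{Y}$, so the same argument with the roles of the two mappings swapped shows that if $(\mathcal{Y}, \mathcal{X})$ is a continuous biframe then so is $(\mathcal{X}, \mathcal{Y})$. I expect no genuine obstacle here; the only point deserving care is the bookkeeping between the operator equality and its integral form via (\ref{3.eqq3.12}), together with the observation drawn from Theorem \ref{t36} (where $S^{\ast}_{\mathcal{X}, \mathcal{Y}} = S_{\mathcal{Y}, \mathcal{X}}$) that the stated hypothesis is in fact the self-adjointness of $S_{\mathcal{X}, \mathcal{Y}}$, which makes the symmetry of the statement transparent.
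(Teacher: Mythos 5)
Your proposal is correct and follows essentially the same route as the paper: both reduce the biframe inequality for $(\mathcal{Y},\mathcal{X})$ to the equality of the quadratic forms $\langle S_{\mathcal{Y},\mathcal{X}}f,f\rangle_{\mathcal{A}}=\langle S_{\mathcal{X},\mathcal{Y}}f,f\rangle_{\mathcal{A}}$ forced by the hypothesis, transfer the bounds $A,B$ verbatim, and dispatch the converse by symmetry. Your added remarks on measurability and on the hypothesis being self-adjointness of $S_{\mathcal{X},\mathcal{Y}}$ (via Theorem \ref{t36}) are harmless refinements of the same argument.
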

\begin{proof}
Let $(\mathcal{X}, \mathcal{Y})$ is a continuous biframe for $\mathcal{H}$ with bounds $A$ and $B$. Then for every $f \in \mathcal{H}$, we have 
\[A\langle f, f\rangle_{\mathcal{A}} \leq\langle S_{\mathcal{X}, \mathcal{Y}}f,f\rangle_{\mathcal{A}}= \int_{\Omega}\langle  f, \mathcal{X}(\omega)\rangle_{\mathcal{A}} \langle  \mathcal{Y}(\omega), f\rangle_{\mathcal{A}} d\mu \leq B\langle f, f\rangle_{\mathcal{A}}.\] 
Since $S_{\mathcal{X}, \mathcal{Y}}=S_{\mathcal{Y}, \mathcal{X}}$ we have 
$$\langle S_{\mathcal{Y}, \mathcal{X}}f,f\rangle_{\mathcal{A}}=\langle S_{\mathcal{X}, \mathcal{Y}}f,f\rangle_{\mathcal{A}}=\int_{\Omega}\langle  f, \mathcal{Y}(\omega)\rangle_{\mathcal{A}} \langle  \mathcal{X}(\omega), f\rangle_{\mathcal{A}} d\mu$$
Thus, for each $f \in \mathcal{H}$, we have 
\[A\langle f, f\rangle_{\mathcal{A}} \leq \int_{\Omega}\langle  f, \mathcal{Y}(\omega)\rangle_{\mathcal{A}} \langle  \mathcal{X}(\omega), f\rangle_{\mathcal{A}} d\mu \leq B\langle f, f\rangle_{\mathcal{A}}.\]
Therefore, $(\mathcal{Y}, \mathcal{X})$ is a continuous biframe for $\mathcal{H}$.

Likewise, we can establish the converse part of this theorem.
\end{proof}
We suppose that  $S_{\mathcal{X}, \mathcal{Y}}$ is self-adjoint operator. Thus, every $f \in \mathcal{H}$ has the representations
\begin{align*}
&f = S_{\mathcal{X}, \mathcal{Y}} S^{- 1}_{\mathcal{X}, \mathcal{Y}}f = \int_{\Omega}\langle  f, S^{- 1}_{\mathcal{X}, \mathcal{Y}}\mathcal{X}(\omega)\rangle_{\mathcal{A}} \mathcal{Y}(\omega)d\mu,\\
&f = S^{- 1}_{\mathcal{X}, \mathcal{Y}}S_{\mathcal{X}, \mathcal{Y}}f = \int_{\Omega}\langle  f, \mathcal{X}(\omega)\rangle_{\mathcal{A}} S^{- 1}_{\mathcal{X}, \mathcal{Y}}\mathcal{Y}(\omega)d\mu.
\end{align*}

In the following theorem, we establish a characterization of a continuous biframe by utilizing its biframe operator.

\begin{theorem}
Let $(\mathcal{X}, \mathcal{Y})$ is a continuous biframe Bessel mapping for $\mathcal{H}$ with respect to $\left(\Omega, \mu\right)$.Then $(\mathcal{X}, \mathcal{Y})$ is a continuous biframe  with bounds $A$ and $B$ for $\mathcal{H}$ if and only if $S_{\mathcal{X}, \mathcal{Y}} \geq A I$, where $S_{\mathcal{X}, \mathcal{Y}}$ is the continuous biframe operator for $(\mathcal{X}, \mathcal{Y})$.
\end{theorem}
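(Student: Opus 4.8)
The plan is to read the asserted equivalence directly off the identity (\ref{3.eqq3.12}), together with the convention, already in force in the discussion preceding Theorem \ref{t36}, that for the biframe operator the relation $S_{\mathcal{X}, \mathcal{Y}} \geq AI$ means $\langle S_{\mathcal{X}, \mathcal{Y}}f, f\rangle_{\mathcal{A}} \geq A\langle f, f\rangle_{\mathcal{A}}$ for every $f \in \mathcal{H}$. Under this reading the theorem is essentially a substitution, so the proof will be short.

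First I would note that the Bessel hypothesis furnishes the upper bound once and for all. Since $(\mathcal{X}, \mathcal{Y})$ is a continuous biframe Bessel mapping with bound $B$, the right-hand inequality of (\ref{3.eqq3.11}) holds, and by (\ref{3.eqq3.12}) this reads $\langle S_{\mathcal{X}, \mathcal{Y}}f, f\rangle_{\mathcal{A}} \leq B\langle f, f\rangle_{\mathcal{A}}$ for all $f$. This estimate is present regardless of which side of the equivalence we are proving, so only the lower bound is genuinely at issue.

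For the forward direction, assuming $(\mathcal{X}, \mathcal{Y})$ is a continuous biframe with lower bound $A$, the left-hand inequality of (\ref{3.eqq3.11}) combined with (\ref{3.eqq3.12}) yields $A\langle f, f\rangle_{\mathcal{A}} \leq \langle S_{\mathcal{X}, \mathcal{Y}}f, f\rangle_{\mathcal{A}}$ for all $f$, which is precisely $S_{\mathcal{X}, \mathcal{Y}} \geq AI$. Conversely, if $S_{\mathcal{X}, \mathcal{Y}} \geq AI$, then by the same dictionary $A\langle f, f\rangle_{\mathcal{A}} \leq \langle S_{\mathcal{X}, \mathcal{Y}}f, f\rangle_{\mathcal{A}} = \int_{\Omega}\langle f, \mathcal{X}(\omega)\rangle_{\mathcal{A}}\langle \mathcal{Y}(\omega), f\rangle_{\mathcal{A}}\,d\mu$; pairing this lower estimate with the Bessel upper estimate recovers (\ref{3.eqq3.11}) in full, so $(\mathcal{X}, \mathcal{Y})$ is a continuous biframe with bounds $A$ and $B$.

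There is no genuine obstacle here; the only point demanding a little care is the legitimacy of phrasing the lower condition as an operator inequality $S_{\mathcal{X}, \mathcal{Y}} \geq AI$. This presupposes that $S_{\mathcal{X}, \mathcal{Y}}$ be adjointable, which is exactly what Theorem \ref{t36} supplies, and that the $\mathcal{A}$-valued quadratic form $\langle S_{\mathcal{X}, \mathcal{Y}}f, f\rangle_{\mathcal{A}}$ be comparable with $A\langle f, f\rangle_{\mathcal{A}}$ in the positive cone of $\mathcal{A}$. Both are already built into the definition of a continuous biframe through (\ref{3.eqq3.11}), so beyond a careful bookkeeping of the inequalities nothing further is needed.
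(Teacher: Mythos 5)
Your proposal is correct and follows essentially the same route as the paper's proof: both directions are read off the identity (\ref{3.eqq3.12}) relating $\langle S_{\mathcal{X},\mathcal{Y}}f,f\rangle_{\mathcal{A}}$ to the biframe integral, with the Bessel hypothesis supplying the upper bound in the converse direction. Your added remarks on the meaning of the operator inequality $S_{\mathcal{X},\mathcal{Y}}\geq AI$ (via adjointability from Theorem \ref{t36} and the positive cone of $\mathcal{A}$) only make explicit what the paper leaves implicit.
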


\begin{proof}
Let $(\mathcal{X}, \mathcal{Y})$ is a continuous biframe for $\mathcal{H}$ with bounds $A$ and $B$. Then using (\ref{3.eqq3.11}) and (\ref{3.eqq3.12}), for each $f \in \mathcal{H}$, we get 
\[A\langle f, f\rangle_{\mathcal{A}} \leq \langle  S_{\mathcal{X}, \mathcal{Y}}f, f\rangle_{\mathcal{A}}  = \int_{\Omega}\langle  f, \mathcal{X}(\omega)\rangle_{\mathcal{A}} \langle  \mathcal{Y}(\omega), f\rangle_{\mathcal{A}} d\mu \leq  B\langle f, f\rangle_{\mathcal{A}}.\]
Thus,
\[A\langle  f, f\rangle_{\mathcal{A}}  \leq \langle  S_{\mathcal{X}, \mathcal{Y}}f, f\rangle_{\mathcal{A}}  ,\]
Hence, $$S_{\mathcal{X}, \mathcal{Y}} \geq A I.$$

Conversely, assume that $S_{\mathcal{X}, \mathcal{Y}} \geq A I$. Then, for every $f \in \mathcal{H}$, we have
\[A\langle  f, f\rangle_{\mathcal{A}} \leq  \langle  S_{\mathcal{X}, \mathcal{Y}}f, f\rangle_{\mathcal{A}}  = \int_{\Omega}\langle  f, \mathcal{X}(\omega)\rangle_{\mathcal{A}} \langle  \mathcal{Y}(\omega), f\rangle_{\mathcal{A}} d\mu.\] 
Since $(\mathcal{X}, \mathcal{Y})$ is a continuous biframe Bessel mapping for $\mathcal{H}$. Therefore, $(\mathcal{X}, \mathcal{Y})$ is a continuous biframe for $\mathcal{H}$.
\end{proof}

Furthermore, we provide a characterization of a continuous biframe with the assistance of an invertible operator on $\mathcal{H}$.
\begin{theorem}\label{3.thm3.39}
Let $\mathcal{T}\in \operatorname{End}_\mathcal{A}^*(\mathcal{H})$ be invertible on $\mathcal{H}$. Then the
following statements are equivalent:
\begin{enumerate}
\item $(\mathcal{X}, \mathcal{Y})$ is a continuous biframe for $\mathcal{H}$ with respect to $\left(\Omega, \mu\right)$
\item $(\mathcal{T}\mathcal{X}, \mathcal{T}\mathcal{Y})$ is a continuous biframe for $\mathcal{H}$ with respect to $\left(\Omega, \mu\right)$.
\end{enumerate}
\end{theorem}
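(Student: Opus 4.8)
The plan is to reduce both implications to a single operator identity relating the two biframe operators. First I would observe that, because $\mathcal{T}$ is adjointable, for every $f\in\mathcal{H}$ and every $\omega\in\Omega$ one has $\langle f,\mathcal{T}\mathcal{X}(\omega)\rangle_{\mathcal{A}}=\langle \mathcal{T}^{\ast}f,\mathcal{X}(\omega)\rangle_{\mathcal{A}}$ and $\langle \mathcal{T}\mathcal{Y}(\omega),f\rangle_{\mathcal{A}}=\langle \mathcal{Y}(\omega),\mathcal{T}^{\ast}f\rangle_{\mathcal{A}}$. This immediately gives weak measurability of $\mathcal{T}\mathcal{X}$ and $\mathcal{T}\mathcal{Y}$ from that of $\mathcal{X},\mathcal{Y}$ (apply the hypothesis to the vector $\mathcal{T}^{\ast}f$), and, after substituting into the defining integral and using the $\mathcal{A}$-linearity of $\mathcal{T}$ to pull it outside, it yields the key factorization $S_{\mathcal{T}\mathcal{X},\mathcal{T}\mathcal{Y}}=\mathcal{T}\,S_{\mathcal{X},\mathcal{Y}}\,\mathcal{T}^{\ast}$, equivalently $\langle S_{\mathcal{T}\mathcal{X},\mathcal{T}\mathcal{Y}}f,f\rangle_{\mathcal{A}}=\langle S_{\mathcal{X},\mathcal{Y}}\mathcal{T}^{\ast}f,\mathcal{T}^{\ast}f\rangle_{\mathcal{A}}$.

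For the implication $(1)\Rightarrow(2)$, I would apply the biframe inequalities for $(\mathcal{X},\mathcal{Y})$ to the vector $g=\mathcal{T}^{\ast}f$, obtaining $A\langle \mathcal{T}^{\ast}f,\mathcal{T}^{\ast}f\rangle_{\mathcal{A}}\le \langle S_{\mathcal{T}\mathcal{X},\mathcal{T}\mathcal{Y}}f,f\rangle_{\mathcal{A}}\le B\langle \mathcal{T}^{\ast}f,\mathcal{T}^{\ast}f\rangle_{\mathcal{A}}$. It then remains to sandwich $\langle \mathcal{T}^{\ast}f,\mathcal{T}^{\ast}f\rangle_{\mathcal{A}}$ between scalar multiples of $\langle f,f\rangle_{\mathcal{A}}$. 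The upper estimate is the stated Lemma applied to $\mathcal{T}^{\ast}$, giving $\langle \mathcal{T}^{\ast}f,\mathcal{T}^{\ast}f\rangle_{\mathcal{A}}\le \|\mathcal{T}\|^{2}\langle f,f\rangle_{\mathcal{A}}$. For the lower estimate I would write $f=(\mathcal{T}^{\ast})^{-1}\mathcal{T}^{\ast}f$ and apply the same Lemma to $(\mathcal{T}^{\ast})^{-1}=(\mathcal{T}^{-1})^{\ast}$, which yields $\langle f,f\rangle_{\mathcal{A}}\le \|\mathcal{T}^{-1}\|^{2}\langle \mathcal{T}^{\ast}f,\mathcal{T}^{\ast}f\rangle_{\mathcal{A}}$, i.e. $\|\mathcal{T}^{-1}\|^{-2}\langle f,f\rangle_{\mathcal{A}}\le \langle \mathcal{T}^{\ast}f,\mathcal{T}^{\ast}f\rangle_{\mathcal{A}}$. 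Chaining these (and using that $A,B>0$ are scalars, so multiplication preserves the order in $\mathcal{A}$) shows that $(\mathcal{T}\mathcal{X},\mathcal{T}\mathcal{Y})$ is a continuous biframe with bounds $A\|\mathcal{T}^{-1}\|^{-2}$ and $B\|\mathcal{T}\|^{2}$.

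For the converse $(2)\Rightarrow(1)$, rather than redo the computation I would exploit that $\mathcal{T}^{-1}\in\operatorname{End}_{\mathcal{A}}^{\ast}(\mathcal{H})$ is again invertible and adjointable, and that $\mathcal{X}=\mathcal{T}^{-1}(\mathcal{T}\mathcal{X})$ and $\mathcal{Y}=\mathcal{T}^{-1}(\mathcal{T}\mathcal{Y})$. Applying the already-proved implication $(1)\Rightarrow(2)$ with $\mathcal{T}$ replaced by $\mathcal{T}^{-1}$ and with the biframe $(\mathcal{T}\mathcal{X},\mathcal{T}\mathcal{Y})$ then returns $(\mathcal{X},\mathcal{Y})$ as a continuous biframe, so the two statements are equivalent.

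The only genuinely delicate point is the lower bound for $\langle \mathcal{T}^{\ast}f,\mathcal{T}^{\ast}f\rangle_{\mathcal{A}}$: one must resist treating the $\mathcal{A}$-valued inequalities as scalar ones and instead keep them as operator inequalities in $\mathcal{A}$, and one must invoke invertibility in the correct place, namely through the boundedness of $(\mathcal{T}^{-1})^{\ast}$ together with $\|(\mathcal{T}^{-1})^{\ast}\|=\|\mathcal{T}^{-1}\|$. Everything else is a direct substitution, so I expect this estimate to be the crux of the argument.
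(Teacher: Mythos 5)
Your proof is correct, and the forward implication $(1)\Rightarrow(2)$ is essentially the paper's own argument: the same adjoint substitution $\langle f,\mathcal{T}\mathcal{X}(\omega)\rangle_{\mathcal{A}}=\langle \mathcal{T}^{\ast}f,\mathcal{X}(\omega)\rangle_{\mathcal{A}}$, the same use of the Lemma $\langle \mathcal{T}x,\mathcal{T}x\rangle_{\mathcal{A}}\leq\|\mathcal{T}\|^{2}\langle x,x\rangle_{\mathcal{A}}$ applied to $\mathcal{T}^{\ast}$ for the upper bound and to $(\mathcal{T}^{-1})^{\ast}$ for the lower bound, yielding the same bounds $A\|\mathcal{T}^{-1}\|^{-2}$ and $B\|\mathcal{T}\|^{2}$ (the paper writes them as $A\|(\mathcal{T}^{-1})^{\ast}\|^{-2}$ and $B\|\mathcal{T}^{\ast}\|^{2}$, which agree by the $C^{\ast}$-identity). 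Where you genuinely diverge is the converse: the paper proves $(2)\Rightarrow(1)$ by redoing the whole computation symmetrically, inserting $f=\mathcal{T}^{\ast}(\mathcal{T}^{-1})^{\ast}f$ and applying the biframe inequality for $(\mathcal{T}\mathcal{X},\mathcal{T}\mathcal{Y})$ at the vector $(\mathcal{T}^{-1})^{\ast}f$, whereas you simply apply the already-proved implication to the operator $\mathcal{T}^{-1}$ and the pair $(\mathcal{T}\mathcal{X},\mathcal{T}\mathcal{Y})$, using $\mathcal{X}=\mathcal{T}^{-1}(\mathcal{T}\mathcal{X})$. Your route is more economical, avoids duplicating the estimate, and produces the same constants ($A\|\mathcal{T}\|^{-2}$ and $B\|\mathcal{T}^{-1}\|^{2}$); its only cost is that it leans explicitly on $\mathcal{T}^{-1}$ being adjointable and invertible in $\operatorname{End}_{\mathcal{A}}^{\ast}(\mathcal{H})$ so that the forward implication may be quoted with $\mathcal{T}^{-1}$ in place of $\mathcal{T}$ --- but the paper uses $(\mathcal{T}^{-1})^{\ast}$ freely as well, so this assumption is common to both arguments. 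Your packaging of the computation as the operator identity $S_{\mathcal{T}\mathcal{X},\mathcal{T}\mathcal{Y}}=\mathcal{T}S_{\mathcal{X},\mathcal{Y}}\mathcal{T}^{\ast}$ is a clean conceptual summary that the paper never states explicitly, though only the weak form $\langle S_{\mathcal{T}\mathcal{X},\mathcal{T}\mathcal{Y}}f,f\rangle_{\mathcal{A}}=\langle S_{\mathcal{X},\mathcal{Y}}\mathcal{T}^{\ast}f,\mathcal{T}^{\ast}f\rangle_{\mathcal{A}}$ is actually needed.
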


\begin{proof}
(1)$\Rightarrow$(2) For each $f \in \mathcal{H}$, $$ \omega \mapsto\langle  f, \mathcal{T}\mathcal{X}(\omega)\rangle_{\mathcal{A}}   $$ and $$\omega \mapsto \langle  f, \mathcal{T}\mathcal{Y}(\omega)\rangle_{\mathcal{A}}  $$ are measurable functions on $\Omega$. Let $(\mathcal{X}, \mathcal{Y})$ is a continuous biframe for $\mathcal{H}$ with bounds $A$ and $B$ and $\mathcal{T}\in \operatorname{End}_\mathcal{A}^*(\mathcal{H})$. for $f \in \mathcal{H}$, we have  
\begin{align*}
\int_{\Omega}\langle  f, \mathcal{T}\mathcal{X}(\omega)\rangle_{\mathcal{A}} \langle  \mathcal{T}\mathcal{Y}(\omega), f\rangle_{\mathcal{A}} d\mu &= \int_{\Omega}\langle  \mathcal{T}^{\ast}f, \mathcal{X}(\omega)\rangle_{\mathcal{A}} \langle  \mathcal{Y}(\omega), \mathcal{T}^{\ast}f\rangle_{\mathcal{A}} d\mu\\
&\leq B\langle  \mathcal{T}^{\ast}f, \mathcal{T}^{\ast}f\rangle_{\mathcal{A}} \\
&\leq B \Vert \mathcal{T}^{\ast}\Vert^{2} \langle  f, f\rangle_{\mathcal{A}}.
\end{align*}

On the other hand, Since $\mathcal{T}\in \operatorname{End}_\mathcal{A}^*(\mathcal{H})$ is invertible, for each $f \in \mathcal{H}$, we have
\begin{align*}
\langle f, f\rangle_{\mathcal{A}} &=\langle \left (\mathcal{T}\mathcal{T}^{- 1}\right )^{\ast}f, \left (\mathcal{T}\mathcal{T}^{- 1}\right )^{\ast}f\rangle_{\mathcal{A}} \\
&=\langle \left (\mathcal{T}^{- 1}\right )^{\ast}\mathcal{T}^{\ast}f, \left (\mathcal{T}^{- 1}\right )^{\ast}\mathcal{T}^{\ast}f\rangle_{\mathcal{A}} \\
&\leq \left\|\left (\mathcal{T}^{- 1}\right )^{\ast}\right\|^{2} \langle\mathcal{T}^{\ast}f,\mathcal{T}^{\ast}f\rangle_{\mathcal{A}}.
\end{align*}
Consequently, for each $f \in \mathcal{H}$, we have 
\begin{align*}
\int_{\Omega}\langle  f, \mathcal{T}\mathcal{X}(\omega)\rangle_{\mathcal{A}} \langle  \mathcal{T}\mathcal{Y}(\omega), f\rangle_{\mathcal{A}} d\mu &= \int_{\Omega}\langle  \mathcal{T}^{\ast}f, \mathcal{X}(\omega)\rangle_{\mathcal{A}} \langle  \mathcal{Y}(\omega), \mathcal{T}^{\ast}f\rangle_{\mathcal{A}} d\mu\\
&\geq A\langle\mathcal{T}^{\ast}f,\mathcal{T}^{\ast}f\rangle_{\mathcal{A}}\\
&\geq A\left\|\left (\mathcal{T}^{- 1}\right )^{\ast}\right\|^{- 2}\langle f, f\rangle_{\mathcal{A}}.
\end{align*}
Hence, $(\mathcal{T}\mathcal{X}, \mathcal{T}\mathcal{Y})$ is a continuous biframe for $\mathcal{H}$ with bounds $A\left\|\left (\mathcal{T}^{- 1}\right )^{\ast}\right\|^{- 2}$ and $B\|\mathcal{T}^{\ast}\|^{2}$.

(2)$\Rightarrow$(1), Assume that $(\mathcal{T}\mathcal{X}, \mathcal{T}\mathcal{Y})$ is a continuous biframe for $\mathcal{H}$ with bounds $A$ and $B$. Now, for each $f \in \mathcal{H}$, we have
\begin{align*}
A\|\mathcal{T}^{\ast}\|^{-2}\langle f, f\rangle_{\mathcal{A}} &= A\|\mathcal{T}^{\ast}\|^{-2}\langle\left(\mathcal{T}^{- 1}\mathcal{T}\right)^{\ast}f,\left(\mathcal{T}^{- 1}\mathcal{T}\right)^{\ast}f\rangle_{\mathcal{A}}\\
 &\leq A\langle\left(\mathcal{T}^{- 1}\right)^{\ast}f,\left(\mathcal{T}^{- 1}\right)^{\ast}f\rangle_{\mathcal{A}} \\
&\leq \int_{\Omega}\langle  \left(\mathcal{T}^{- 1}\right)^{\ast}f, \mathcal{T}\mathcal{X}(\omega)\rangle_{\mathcal{A}} \langle  \mathcal{T}\mathcal{Y}(\omega), \left(\mathcal{T}^{- 1}\right)^{\ast}f\rangle_{\mathcal{A}} d\mu \\
&= \int_{\Omega}\langle  \mathcal{T}^{\ast}\left(\mathcal{T}^{- 1}\right)^{\ast}f, \mathcal{X}(\omega)\rangle_{\mathcal{A}} \langle  \mathcal{Y}(\omega), \mathcal{T}^{\ast}\left(\mathcal{T}^{- 1}\right)^{\ast}f\rangle_{\mathcal{A}} d\mu\\
&= \int_{\Omega}\langle  f, \mathcal{X}(\omega)\rangle_{\mathcal{A}} \langle  \mathcal{Y}(\omega), f\rangle_{\mathcal{A}} d\mu.   
\end{align*}
On the other hand, for each $f \in \mathcal{H}$, we have
\begin{align*}
&\int_{\Omega}\langle  f, \mathcal{X}(\omega)\rangle_{\mathcal{A}} \langle  \mathcal{Y}(\omega), f\rangle_{\mathcal{A}} d\mu\\
&=\int_{\Omega}\langle  \mathcal{T}^{\ast}\left(\mathcal{T}^{- 1}\right)^{\ast}f, \mathcal{X}(\omega)\rangle_{\mathcal{A}} \langle  \mathcal{Y}(\omega), \mathcal{T}^{\ast}\left(\mathcal{T}^{- 1}\right)^{\ast}f\rangle_{\mathcal{A}} d\mu\\
&=\int_{\Omega}\langle  \left(\mathcal{T}^{- 1}\right)^{\ast}f, \mathcal{T}\mathcal{X}(\omega)\rangle_{\mathcal{A}} \langle  \mathcal{T}\mathcal{Y}(\omega), \left(\mathcal{T}^{- 1}\right)^{\ast}f\rangle_{\mathcal{A}} d\mu\\
&\leq B\langle\left(\mathcal{T}^{- 1}\right)^{\ast}f,\left(\mathcal{T}^{- 1}\right)^{\ast}f\rangle_{\mathcal{A}}\\ 
&\leq B\left\|\left(\mathcal{T}^{- 1}\right)^{\ast}\right\|^{2}\langle f, f\rangle_{\mathcal{A}}.
\end{align*}
Therefore, $(\mathcal{X}, \mathcal{Y})$ is a continuous biframe for $\mathcal{H}$ with bounds $A\|\mathcal{T}^{\ast}\|^{-2}$ and $B\left\|\left(\mathcal{T}^{- 1}\right)^{\ast}\right\|^{2}$.
\end{proof}

\section{Continuous biframe Bessel multiplier and dual continuous biframe in Hilbert $ C^{\ast}- $modules}

In this section, we will delve into the discussion of continuous biframe Bessel multipliers and we study dual continuous biframe in Hilbert $ C^{\ast}- $modules.

Let $(\mathcal{X}, \mathcal{X})$ and $(\mathcal{Y}, \mathcal{Y})$ be continuous biframe Bessel mappings for $\mathcal{H}$ with respect to $\left(\Omega, \mu\right)$ and let $m : \Omega \to \mathbb{C}$ be a measurable function. Then the operator $\mathcal{M}_{m, \mathcal{X}, \mathcal{Y}} : \mathcal{H} \to \mathcal{H}$ defined by
\begin{align*}
&\langle  \mathcal{M}_{m, \mathcal{X}, \mathcal{Y}}f, g \rangle_{\mathcal{A}}  = \int_{\Omega}m(\omega) \langle  f, \mathcal{X}(\omega) \rangle_{\mathcal{A}} \langle  \mathcal{Y}(\omega), g\rangle_{\mathcal{A}} d\mu,\quad (\forall f, g \in \mathcal{H}),
\end{align*} 
is called continuous biframe Bessel multiplier of $\mathcal{X}$ and $\mathcal{Y}$ with respect to $m$. 
\begin{theorem}\label{thm4.22}
The continuous biframe Bessel multiplier of $\mathcal{X}$ and $\mathcal{Y}$ with respect to $m$ is well defined and bounded. 
\end{theorem}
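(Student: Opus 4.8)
\emph{Proof proposal.} The plan is to exhibit $\mathcal{M}_{m,\mathcal{X},\mathcal{Y}}$ as a composition of three bounded operators, rather than trying to read an operator directly off the defining sesquilinear form; the latter route is blocked because Hilbert $C^{\ast}$-modules are not self-dual, so a bounded form $(f,g)\mapsto\langle \mathcal{M} f,g\rangle_{\mathcal{A}}$ need not come from an adjointable operator. Concretely, I would write $\mathcal{M}_{m,\mathcal{X},\mathcal{Y}}=\mathcal{T}_{\mathcal{Y}}\,D_m\,\mathcal{T}^{\ast}_{\mathcal{X}}$, where $\mathcal{T}^{\ast}_{\mathcal{X}}:\mathcal{H}\to L^2(\Omega,\mathcal{A})$ is the analysis operator $(\mathcal{T}^{\ast}_{\mathcal{X}}f)(\omega)=\langle f,\mathcal{X}(\omega)\rangle_{\mathcal{A}}$, $\mathcal{T}_{\mathcal{Y}}:L^2(\Omega,\mathcal{A})\to\mathcal{H}$ is the synthesis operator for $\mathcal{Y}$, and $D_m:L^2(\Omega,\mathcal{A})\to L^2(\Omega,\mathcal{A})$ is multiplication by $m$. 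Here I read the definition of $\mathcal{M}$ in the $\mathcal{A}$-valued space $L^2(\Omega,\mathcal{A})$, and I take, as is implicit, $m$ essentially bounded.

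Second, I would check each factor is a well-defined bounded operator. Since $(\mathcal{X},\mathcal{X})$ is Bessel with bound $B_{\mathcal{X}}$, the estimate $\|\mathcal{T}^{\ast}_{\mathcal{X}}f\|^2=\big\|\int_\Omega\langle f,\mathcal{X}(\omega)\rangle_{\mathcal{A}}\langle \mathcal{X}(\omega),f\rangle_{\mathcal{A}}\,d\mu\big\|\le B_{\mathcal{X}}\|f\|^2$ shows $\mathcal{T}^{\ast}_{\mathcal{X}}$ is bounded with $\|\mathcal{T}^{\ast}_{\mathcal{X}}\|\le\sqrt{B_{\mathcal{X}}}$; likewise $\mathcal{T}_{\mathcal{Y}}$ is bounded with $\|\mathcal{T}_{\mathcal{Y}}\|\le\sqrt{B_{\mathcal{Y}}}$. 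For $D_m$ one computes $\|D_m\varphi\|^2=\big\|\int_\Omega|m(\omega)|^2\langle\varphi(\omega),\varphi(\omega)\rangle_{\mathcal{A}}\,d\mu\big\|\le\|m\|_\infty^2\|\varphi\|^2$, so $D_m$ is bounded and adjointable with $D_m^{\ast}=D_{\bar m}$. Composing yields boundedness of $\mathcal{M}_{m,\mathcal{X},\mathcal{Y}}$ with $\|\mathcal{M}_{m,\mathcal{X},\mathcal{Y}}\|\le\|m\|_\infty\sqrt{B_{\mathcal{X}}B_{\mathcal{Y}}}$.

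Third, I would verify that this composition actually reproduces the prescribed weak form. Unwinding, for all $f,g\in\mathcal{H}$ one has $\langle\mathcal{T}_{\mathcal{Y}}D_m\mathcal{T}^{\ast}_{\mathcal{X}}f,g\rangle_{\mathcal{A}}=\langle D_m\mathcal{T}^{\ast}_{\mathcal{X}}f,\mathcal{T}^{\ast}_{\mathcal{Y}}g\rangle_{\mathcal{A}}=\int_\Omega\langle m(\omega)\langle f,\mathcal{X}(\omega)\rangle_{\mathcal{A}},\langle g,\mathcal{Y}(\omega)\rangle_{\mathcal{A}}\rangle_{\mathcal{A}}\,d\mu$, and using $\langle a,b\rangle_{\mathcal{A}}=ab^{\ast}$ in $\mathcal{A}$ together with property~(3) of the inner product this collapses to $\int_\Omega m(\omega)\langle f,\mathcal{X}(\omega)\rangle_{\mathcal{A}}\langle\mathcal{Y}(\omega),g\rangle_{\mathcal{A}}\,d\mu$, matching the definition. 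In particular the defining integral converges in $\mathcal{A}$ for every $f,g$, so $\mathcal{M}$ is well defined.

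Finally, I expect the main obstacle to be exactly the passage from a bounded form to a genuine bounded operator, and the factorization above is what sidesteps the failure of self-duality entirely. The only genuinely delicate points are (i) ensuring the analysis image $\mathcal{T}^{\ast}_{\mathcal{X}}f$ lands in $L^2(\Omega,\mathcal{A})$, which is guaranteed by the Bessel bound, and that $\mathcal{T}_{\mathcal{Y}}$ is the adjoint of $\mathcal{T}^{\ast}_{\mathcal{Y}}$ in that $\mathcal{A}$-valued space; and (ii) pinning down the standing hypothesis $m\in L^\infty(\Omega,\mu)$, without which $D_m$, hence $\mathcal{M}$, need not be bounded. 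Alternatively one may argue directly: a single Cauchy--Schwarz step in $L^2(\Omega,\mathcal{A})$ gives $\|\langle\mathcal{M}f,g\rangle_{\mathcal{A}}\|\le\|m\|_\infty\|\mathcal{T}^{\ast}_{\mathcal{X}}f\|\,\|\mathcal{T}^{\ast}_{\mathcal{Y}}g\|\le\|m\|_\infty\sqrt{B_{\mathcal{X}}B_{\mathcal{Y}}}\,\|f\|\,\|g\|$, but this only controls the form and must still be paired with the factorization to produce the operator itself.
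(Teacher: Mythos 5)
Your proof is correct, but it takes a genuinely different route from the paper's. The paper argues exactly along the lines of the ``alternative'' you sketch in your final paragraph: it applies Cauchy--Schwarz directly to the defining form, obtaining $\Vert\langle \mathcal{M}_{m,\mathcal{X},\mathcal{Y}}f,g\rangle_{\mathcal{A}}\Vert \le \|m\|_{\infty}\sqrt{D_{1}D_{2}}\,\|f\|\,\|g\|$, where $D_{1},D_{2}$ are the Bessel bounds of $(\mathcal{X},\mathcal{X})$ and $(\mathcal{Y},\mathcal{Y})$ (your $B_{\mathcal{X}},B_{\mathcal{Y}}$), and from this bound on the form it immediately concludes that $\mathcal{M}_{m,\mathcal{X},\mathcal{Y}}$ is well defined and bounded. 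Your factorization $\mathcal{M}_{m,\mathcal{X},\mathcal{Y}}=\mathcal{T}_{\mathcal{Y}}\,D_{m}\,\mathcal{T}^{\ast}_{\mathcal{X}}$ through $L^{2}(\Omega,\mathcal{A})$ buys something the paper leaves implicit: since Hilbert $C^{\ast}$-modules are not self-dual, a bounded sesquilinear form does not by itself yield an (adjointable) operator, so the paper's estimate, strictly speaking, only controls the form, whereas your composition of three concrete bounded adjointable operators actually exhibits the operator, recovers the same norm bound $\|m\|_{\infty}\sqrt{D_{1}D_{2}}$, and gives adjointability for free, with $\mathcal{M}^{\ast}_{m,\mathcal{X},\mathcal{Y}}=\mathcal{T}_{\mathcal{X}}D_{\bar{m}}\mathcal{T}^{\ast}_{\mathcal{Y}}=\mathcal{M}_{\bar{m},\mathcal{Y},\mathcal{X}}$, a fact the paper proves separately in its next theorem. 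The price of your route is that you must invoke (or verify) the standard facts that for a Bessel mapping the analysis operator lands in $L^{2}(\Omega,\mathcal{A})$ and that the synthesis operator is its adjoint; you flag both of these, as well as the standing hypothesis $m\in L^{\infty}(\Omega,\mu)$, which the paper also uses tacitly through the appearance of $\|m\|_{\infty}$ in its estimate.
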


\begin{proof}
Let $(\mathcal{X}, \mathcal{X})$ and $(\mathcal{Y}, \mathcal{Y})$ be continuous biframe Bessel mappings for $\mathcal{H}$ with bounds $D_{1}$ and $D_{2}$. Then we have for any $f, g \in H$, 
\begin{align*}
\Vert\langle  \mathcal{M}_{m, \mathcal{X}, \mathcal{Y}}f, g\rangle_{\mathcal{A}} \Vert &=\Vert\int_{\Omega}m(\omega) \langle  f, \mathcal{X}(\omega)\rangle_{\mathcal{A}} \langle  \mathcal{Y}(\omega), g\rangle_{\mathcal{A}} d\mu\Vert
\\
&\leq\Vert\left(\int_{\Omega}|m(\omega) |^{2}|\langle  f, \mathcal{X}(\omega)\rangle_{\mathcal{A}} |^{2} d\mu\right)^{\frac{1}{2}}\left(\int_{\Omega}|\langle  g, \mathcal{Y}(\omega)\rangle_{\mathcal{A}} |^{2}d\mu\right)^{\frac{1}{2}}\Vert \\
&\leq\|m\|_{\infty}\Vert\left(\int_{\Omega}\langle  f, \mathcal{X}(\omega)\rangle_{\mathcal{A}} \langle  \mathcal{X}(\omega),f \rangle_{\mathcal{A}} d\mu\right)^{\frac{1}{2}}\Vert\Vert\left(\int_{\Omega}\langle  g, \mathcal{Y}(\omega)\rangle_{\mathcal{A}} \langle  \mathcal{Y},g (\omega)\rangle_{\mathcal{A}} d\mu\right)^{\frac{1}{2}}\Vert\\
&\leq\|m\|_{\infty}\sqrt{D_{1}D_{2}}\left\|f\right\|\left\|g\right\|.
\end{align*}
This shows that $\left\|\mathcal{M}_{m, \mathcal{X}, \mathcal{Y}}\right\| \leq \|m\|_{\infty}\sqrt{D_{1}D_{2}}$, meaning that $\mathcal{M}_{m, \mathcal{X}, \mathcal{Y}}$ is well-defined and bounded.
\end{proof}

After proving Theorem \ref{thm4.22}, for every $f \in \mathcal{H}$, we obtain:
\begin{align}
\left\|\mathcal{M}_{m, \mathcal{X}, \mathcal{Y}}f\right\|&=\sup\limits_{\left\|g\right\| = 1}\left\|\langle\mathcal{M}_{m, \mathcal{X}, \mathcal{Y}}f,g\rangle_{\mathcal{A}}\right\| \\ &= \sup\limits_{\left\|g\right\| = 1}\int_{\Omega}\Vert m(\omega) \langle  f, \mathcal{X}(\omega) \rangle_{\mathcal{A}} \langle  \mathcal{Y}(\omega), g\rangle_{\mathcal{A}} d\mu \Vert\nonumber\\
&\leq\|m\|_{\infty}\sup\limits_{\left\|g\right\| = 1}\Vert\left(\int_{\Omega}\langle  f, \mathcal{X}(\omega)\rangle_{\mathcal{A}} \langle  \mathcal{X}(\omega),f \rangle_{\mathcal{A}} d\mu\right)^{\frac{1}{2}}\Vert\Vert\left(\int_{\Omega}\langle  g, \mathcal{Y}(\omega)\rangle_{\mathcal{A}} \langle  \mathcal{Y}(\omega),g \rangle_{\mathcal{A}} d\mu\right)^{\frac{1}{2}}\Vert\nonumber\\
&\leq\|m\|_{\infty}\sqrt{D_{2}}\Vert\left(\int_{\Omega}\langle  f, \mathcal{X}(\omega)\rangle_{\mathcal{A}} \langle  \mathcal{X},f (\omega)\rangle_{\mathcal{A}} d\mu\right)^{\frac{1}{2}}\Vert. \label{en4.22}
\end{align}
Likewise, it can be shown that
\begin{align}
\left\|\mathcal{M}^{\ast}_{m, \mathcal{X}, \mathcal{Y}}g\right\|&=\sup\limits_{\left\|f\right\| = 1}\left\|\langle\mathcal{M}_{m, \mathcal{X}, \mathcal{Y}}g,f\rangle_{\mathcal{A}}\right\| \\
& \leq\|m\|_{\infty}\sqrt{D_{1}}\Vert\left(\int_{\Omega}\langle  g, \mathcal{Y}(\omega)\rangle_{\mathcal{A}} \langle  \mathcal{Y}(\omega),g \rangle_{\mathcal{A}} d\mu\right)^{\frac{1}{2}}\Vert.\label{en4.23} 
\end{align}

\begin{theorem}
Let $\mathcal{M}_{m, \mathcal{X}, \mathcal{Y}}$ be the continuous biframe Bessel multiplier of $\mathcal{X}$ and $\mathcal{Y}$ with respect to $m$. Then $\mathcal{M}^{\ast}_{m, \mathcal{X}, \mathcal{Y}} = \mathcal{M}_{\overline{m}, \mathcal{Y}, \mathcal{X}}$.
\end{theorem}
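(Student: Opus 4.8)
The plan is to identify $\mathcal{M}_{\overline{m},\mathcal{Y},\mathcal{X}}$ as the adjoint of $\mathcal{M}_{m,\mathcal{X},\mathcal{Y}}$ by verifying the defining relation of the adjoint directly, namely that
\[
\langle \mathcal{M}_{m,\mathcal{X},\mathcal{Y}}f,\,g\rangle_{\mathcal{A}} = \langle f,\,\mathcal{M}_{\overline{m},\mathcal{Y},\mathcal{X}}g\rangle_{\mathcal{A}}
\qquad \text{for all } f,g\in\mathcal{H}.
\]
First I would note that $\mathcal{M}_{\overline{m},\mathcal{Y},\mathcal{X}}$ is itself a well-defined bounded operator: this is exactly Theorem \ref{thm4.22} applied with the two Bessel mappings interchanged, using that $\overline{m}$ is measurable and $\|\overline{m}\|_{\infty}=\|m\|_{\infty}<\infty$. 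Once both operators are known to be bounded and adjointable, establishing the displayed identity for all $f,g$ is enough to conclude $\mathcal{M}^{\ast}_{m,\mathcal{X},\mathcal{Y}}=\mathcal{M}_{\overline{m},\mathcal{Y},\mathcal{X}}$ by uniqueness of the adjoint.

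The computation would start from the right-hand side and run the definition of $\mathcal{M}_{\overline{m},\mathcal{Y},\mathcal{X}}$ backwards through the involution. Using the conjugate symmetry $\langle x,y\rangle_{\mathcal{A}}=\langle y,x\rangle_{\mathcal{A}}^{\ast}$ of the $\mathcal{A}$-valued inner product, I would write $\langle f,\mathcal{M}_{\overline{m},\mathcal{Y},\mathcal{X}}g\rangle_{\mathcal{A}}=\big(\langle \mathcal{M}_{\overline{m},\mathcal{Y},\mathcal{X}}g,\,f\rangle_{\mathcal{A}}\big)^{\ast}$ and then expand the inner term by its definition:
\[
\langle \mathcal{M}_{\overline{m},\mathcal{Y},\mathcal{X}}g,\,f\rangle_{\mathcal{A}}
= \int_{\Omega}\overline{m}(\omega)\,\langle g,\mathcal{Y}(\omega)\rangle_{\mathcal{A}}\,\langle \mathcal{X}(\omega),f\rangle_{\mathcal{A}}\,d\mu.
\]
Since the involution on the $C^{\ast}$-algebra $\mathcal{A}$ is an isometric (conjugate-linear) map, it commutes with the Bochner integral, so I may pull $(\cdot)^{\ast}$ inside. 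Then the anti-multiplicative law $(ab)^{\ast}=b^{\ast}a^{\ast}$ together with the scalar conjugation $\big(\overline{m}(\omega)\,z\big)^{\ast}=m(\omega)\,z^{\ast}$ (recall $m(\omega)\in\mathbb{C}$) reverses the order of the two inner-product factors and turns $\overline{m}$ back into $m$, yielding an integrand of the form $m(\omega)\,\langle \mathcal{X}(\omega),f\rangle_{\mathcal{A}}^{\ast}\,\langle g,\mathcal{Y}(\omega)\rangle_{\mathcal{A}}^{\ast}$.

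A second application of conjugate symmetry to each factor, namely $\langle \mathcal{X}(\omega),f\rangle_{\mathcal{A}}^{\ast}=\langle f,\mathcal{X}(\omega)\rangle_{\mathcal{A}}$ and $\langle g,\mathcal{Y}(\omega)\rangle_{\mathcal{A}}^{\ast}=\langle \mathcal{Y}(\omega),g\rangle_{\mathcal{A}}$, then recovers precisely
\[
\int_{\Omega}m(\omega)\,\langle f,\mathcal{X}(\omega)\rangle_{\mathcal{A}}\,\langle \mathcal{Y}(\omega),g\rangle_{\mathcal{A}}\,d\mu
= \langle \mathcal{M}_{m,\mathcal{X},\mathcal{Y}}f,\,g\rangle_{\mathcal{A}},
\]
which closes the argument. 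The only point requiring genuine care—the \emph{main obstacle}, though it is more bookkeeping than difficulty—is tracking the order reversal produced by $(ab)^{\ast}=b^{\ast}a^{\ast}$ in the noncommutative algebra $\mathcal{A}$ and checking that, after conjugating the scalar $\overline{m}$ back to $m$ and re-symmetrizing the two factors, the factors land in the correct order $\langle f,\mathcal{X}(\omega)\rangle_{\mathcal{A}}\langle \mathcal{Y}(\omega),g\rangle_{\mathcal{A}}$ matching the definition of $\mathcal{M}_{m,\mathcal{X},\mathcal{Y}}$; the interchange of the involution with the integral should be remarked upon but follows immediately from the isometry of $\ast$ on a $C^{\ast}$-algebra.
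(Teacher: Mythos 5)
Your proposal is correct and proves exactly what the paper proves — the sesquilinear identity $\langle \mathcal{M}_{m,\mathcal{X},\mathcal{Y}}f,\,g\rangle_{\mathcal{A}} = \langle f,\,\mathcal{M}_{\overline{m},\mathcal{Y},\mathcal{X}}g\rangle_{\mathcal{A}}$ for all $f,g\in\mathcal{H}$ — but the bookkeeping runs through different axioms. The paper works from the left-hand side and in the \emph{strong} form of the multiplier: it rewrites the integrand as $\langle f,\,\overline{m}(\omega)\langle g,\mathcal{Y}(\omega)\rangle_{\mathcal{A}}\,\mathcal{X}(\omega)\rangle_{\mathcal{A}}$ using the module relation $\langle x, ay\rangle_{\mathcal{A}}=\langle x,y\rangle_{\mathcal{A}}a^{\ast}$, pulls the Bochner integral into the second slot of the inner product, and recognizes the resulting vector-valued integral $\int_{\Omega}\overline{m}(\omega)\langle g,\mathcal{Y}(\omega)\rangle_{\mathcal{A}}\mathcal{X}(\omega)\,d\mu$ as $\mathcal{M}_{\overline{m},\mathcal{Y},\mathcal{X}}g$. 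You instead stay entirely at the level of the \emph{weak} definition: you apply the involution to the $\mathcal{A}$-valued integral defining $\langle \mathcal{M}_{\overline{m},\mathcal{Y},\mathcal{X}}g,\,f\rangle_{\mathcal{A}}$ and use $(ab)^{\ast}=b^{\ast}a^{\ast}$ together with conjugate symmetry — incidentally, this is the same device the paper itself uses to prove $S_{\mathcal{X},\mathcal{Y}}^{\ast}=S_{\mathcal{Y},\mathcal{X}}$ in Theorem~\ref{t36}. Your route buys two things the paper leaves implicit: you only need the involution (a conjugate-linear isometry) to commute with an $\mathcal{A}$-valued integral, rather than justifying the exchange of a vector-valued Bochner integral with the inner product; and you explicitly ground well-definedness and adjointability of $\mathcal{M}_{\overline{m},\mathcal{Y},\mathcal{X}}$ via Theorem~\ref{thm4.22} applied to the swapped pair before invoking uniqueness of the adjoint. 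The paper's route, in exchange, exhibits $\mathcal{M}_{\overline{m},\mathcal{Y},\mathcal{X}}g$ concretely as a vector-valued integral, which is useful elsewhere (e.g., in the dual-biframe results). Both arguments are complete and correct; yours is marginally more careful on the analytic justifications.
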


\begin{proof}
For $f, g \in H$, we have
\begin{align*}
&\langle  f, \mathcal{M}^{\ast}_{m, \mathcal{X}, \mathcal{Y}}g\rangle_{\mathcal{A}}  = \langle  \mathcal{M}_{m, \mathcal{X}, \mathcal{Y}}f, g \rangle_{\mathcal{A}} \\
& = \int_{\Omega}m(\omega) \langle  f, \mathcal{X}(\omega) \rangle_{\mathcal{A}} \langle  \mathcal{Y}(\omega), g\rangle_{\mathcal{A}} d\mu\\
&=\int_{\Omega}\langle  f, \overline{m}(\omega) \langle  g, \mathcal{Y}(\omega)\rangle_{\mathcal{A}} \mathcal{X}(\omega)\rangle_{\mathcal{A}} d\mu\\
&=\langle  f,\left(\int_{\Omega} \overline{m}(\omega) \langle  g, \mathcal{Y}(\omega)\rangle_{\mathcal{A}} \mathcal{X}(\omega)d\mu\right)\rangle_{\mathcal{A}} \\
&= \langle  f, \mathcal{M}_{\overline{m}, \mathcal{Y}, \mathcal{X}}g \rangle_{\mathcal{A}} .
\end{align*}
Which shows that, $\mathcal{M}^{\ast}_{m, \mathcal{X}, \mathcal{Y}} = \mathcal{M}_{\overline{m}, \mathcal{Y}, \mathcal{X}}$.
\end{proof}

\begin{theorem}\label{2.thm2.22}
Let $\mathcal{M}_{m, \mathcal{X}, \mathcal{Y}}$ be the continuous biframe Bessel multiplier of $\mathcal{X}$ and $\mathcal{Y}$ with respect to $m$. Then $\left(\mathcal{X}, \mathcal{X}\right)$ is a continuous biframe for $\mathcal{H}$ provided for each $f \in \mathcal{H}$, there exists $\alpha > 0$, such that
\[\left\|\mathcal{M}_{m, \mathcal{X}, \mathcal{Y}}f\right\| \geq \alpha \left\|f\right\|.\]
\end{theorem}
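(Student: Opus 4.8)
The plan is to notice that $(\mathcal{X}, \mathcal{X})$ is assumed to be a continuous biframe Bessel mapping, say with bound $D_1$, so that only the \emph{lower} frame inequality remains to be produced. First I would reuse the estimate already derived in (\ref{en4.22}),
\[
\left\|\mathcal{M}_{m, \mathcal{X}, \mathcal{Y}}f\right\| \leq \|m\|_{\infty}\sqrt{D_2}\left\|\left(\int_{\Omega}\langle f, \mathcal{X}(\omega)\rangle_{\mathcal{A}}\langle \mathcal{X}(\omega), f\rangle_{\mathcal{A}}\, d\mu\right)^{\frac{1}{2}}\right\|,
\]
where $D_2$ is the Bessel bound of $(\mathcal{Y}, \mathcal{Y})$, and combine it with the hypothesis $\left\|\mathcal{M}_{m, \mathcal{X}, \mathcal{Y}}f\right\| \geq \alpha\|f\|$ to obtain, for every $f \in \mathcal{H}$,
\[
\alpha\|f\| \leq \|m\|_{\infty}\sqrt{D_2}\left\|\left(\int_{\Omega}\langle f, \mathcal{X}(\omega)\rangle_{\mathcal{A}}\langle \mathcal{X}(\omega), f\rangle_{\mathcal{A}}\, d\mu\right)^{\frac{1}{2}}\right\|.
\]

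Next I would introduce the frame operator $S_{\mathcal{X}, \mathcal{X}}$ of $\mathcal{X}$, a positive adjointable operator satisfying $\langle S_{\mathcal{X}, \mathcal{X}}f, f\rangle_{\mathcal{A}} = \int_{\Omega}\langle f, \mathcal{X}(\omega)\rangle_{\mathcal{A}}\langle \mathcal{X}(\omega), f\rangle_{\mathcal{A}}\, d\mu$. Writing $\langle S_{\mathcal{X}, \mathcal{X}}f, f\rangle_{\mathcal{A}} = \langle S_{\mathcal{X}, \mathcal{X}}^{1/2}f, S_{\mathcal{X}, \mathcal{X}}^{1/2}f\rangle_{\mathcal{A}}$ and using $\|a^{1/2}\|^{2} = \|a\|$ for a positive $a \in \mathcal{A}$, the last displayed inequality reads $\alpha\|f\| \leq \|m\|_{\infty}\sqrt{D_2}\,\|S_{\mathcal{X}, \mathcal{X}}^{1/2}f\|$, i.e. $\|S_{\mathcal{X}, \mathcal{X}}^{1/2}f\| \geq \frac{\alpha}{\|m\|_{\infty}\sqrt{D_2}}\|f\|$ for all $f$. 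Hence $S_{\mathcal{X}, \mathcal{X}}^{1/2}$ is bounded below.

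The crux is to upgrade this norm-level lower bound to the operator inequality $S_{\mathcal{X}, \mathcal{X}} \geq A\, I$ demanded by the biframe definition. Being self-adjoint and bounded below, $S_{\mathcal{X}, \mathcal{X}}^{1/2}$ is injective and has closed range; for a self-adjoint adjointable operator on a Hilbert $C^{\ast}$-module the closed range yields the decomposition $\mathcal{H} = \ker S_{\mathcal{X}, \mathcal{X}}^{1/2} \oplus \operatorname{Ran} S_{\mathcal{X}, \mathcal{X}}^{1/2}$, so injectivity forces surjectivity and $S_{\mathcal{X}, \mathcal{X}}^{1/2}$, hence $S_{\mathcal{X}, \mathcal{X}}$, is invertible. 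A positive invertible operator obeys $S_{\mathcal{X}, \mathcal{X}} \geq \|S_{\mathcal{X}, \mathcal{X}}^{-1}\|^{-1} I$, which gives $A\langle f, f\rangle_{\mathcal{A}} \leq \int_{\Omega}\langle f, \mathcal{X}(\omega)\rangle_{\mathcal{A}}\langle \mathcal{X}(\omega), f\rangle_{\mathcal{A}}\, d\mu$ with $A = \|S_{\mathcal{X}, \mathcal{X}}^{-1}\|^{-1}$; together with the Bessel bound $D_1$ this shows $(\mathcal{X}, \mathcal{X})$ is a continuous biframe. I expect this last passage — deducing invertibility of the bounded-below self-adjoint operator $S_{\mathcal{X}, \mathcal{X}}^{1/2}$ — to be the main obstacle, because in Hilbert $C^{\ast}$-modules being bounded below does not by itself give surjectivity, and one must genuinely invoke self-adjointness together with the complementability of a closed range.
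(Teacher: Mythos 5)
Your proposal is correct, and it takes a genuinely different --- and in fact more careful --- route than the paper. The paper's own proof simply squares the hypothesis together with (\ref{en4.22}) and writes the chain
\[
\alpha^{2}\langle f, f\rangle_{\mathcal{A}} \leq \left\|\mathcal{M}_{m, \mathcal{X}, \mathcal{Y}}f\right\|^{2} \leq \|m\|^{2}_{\infty}D_{2}\int_{\Omega}\langle  f, \mathcal{X}(\omega)\rangle_{\mathcal{A}} \langle  \mathcal{X}(\omega),f \rangle_{\mathcal{A}}\, d\mu,
\]
from which it reads off the lower bound $\alpha^{2}/(\|m\|^{2}_{\infty}D_{2})$. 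As written, this chain compares the scalar $\left\|\mathcal{M}_{m, \mathcal{X}, \mathcal{Y}}f\right\|^{2}$ with elements of $\mathcal{A}$ on both sides; what the hypothesis and (\ref{en4.22}) genuinely yield is only the norm inequality $\alpha\|f\| \leq \|m\|_{\infty}\sqrt{D_{2}}\,\bigl\|\bigl(\int_{\Omega}\langle f,\mathcal{X}(\omega)\rangle_{\mathcal{A}}\langle\mathcal{X}(\omega),f\rangle_{\mathcal{A}}\,d\mu\bigr)^{1/2}\bigr\|$, and in a Hilbert $C^{\ast}$-module a norm inequality between positive elements does not imply the corresponding order inequality in $\mathcal{A}$. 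You correctly identify this passage as the crux and supply the missing argument: rewrite the norm bound as $\|S_{\mathcal{X},\mathcal{X}}^{1/2}f\| \geq c\|f\|$ with $c = \alpha/(\|m\|_{\infty}\sqrt{D_{2}})$, use the closed-range (Lance) theorem for the self-adjoint adjointable operator $S_{\mathcal{X},\mathcal{X}}^{1/2}$ to pass from bounded-below to invertible, and then convert the spectral inequality $S_{\mathcal{X},\mathcal{X}} \geq \|S_{\mathcal{X},\mathcal{X}}^{-1}\|^{-1}I$ into the required $\mathcal{A}$-valued inequality via the equivalence between operator positivity and $\langle Tf,f\rangle_{\mathcal{A}} \geq 0$. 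What your route buys is rigor (it closes a real gap in the published proof) at the cost of heavier machinery, namely complementability of closed ranges in Hilbert $C^{\ast}$-modules; note also that since $\|S_{\mathcal{X},\mathcal{X}}^{-1}\| \leq c^{-2}$, your lower bound $\|S_{\mathcal{X},\mathcal{X}}^{-1}\|^{-1} \geq \alpha^{2}/(\|m\|^{2}_{\infty}D_{2})$ recovers (indeed improves on) the bound stated in the paper.
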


\begin{proof}
For every $f \in \mathcal{H}$, using (\ref{en4.22}), we obtain:
\begin{align*}
\alpha^{2}\langle f, f\rangle_{\mathcal{A}} &\leq \left\|\mathcal{M}_{m, \mathcal{X}, \mathcal{Y}}f\right\|^{2}\\ 
& \leq \|m\|^{2}_{\infty}D_{2}\int_{\Omega}\langle  f, \mathcal{X}(\omega)\rangle_{\mathcal{A}} \langle  \mathcal{X}(\omega),f \rangle_{\mathcal{A}} d\mu\\
\end{align*} 
Thus,
$$\dfrac{\alpha^{2}}{\|m\|^{2}_{\infty}D_{2}}\langle f, f\rangle_{\mathcal{A}} \leq \int_{\Omega}\langle  f, \mathcal{X}(\omega)\rangle_{\mathcal{A}} \langle  \mathcal{X}(\omega), f\rangle_{\mathcal{A}} d\mu\leq D_{1}\langle f, f\rangle_{\mathcal{A}}.$$

Therefore,  $\left(\mathcal{X}, \mathcal{X}\right)$ is a continuous biframe for $\mathcal{H}$ with bounds $\dfrac{\alpha	^{2}}{\|m\|^{2}_{\infty}D_{2}}$ and $D_{1}$.  
\end{proof}

\begin{theorem}\label{2.th2.4.16}
Let $\mathcal{M}_{m, \mathcal{X}, \mathcal{Y}}$ be the continuous biframe Bessel multiplier of $\mathcal{X}$ and $\mathcal{Y}$ with respect to $m$. Suppose $ 1-\alpha>0,  1+\beta >0 $ such that for each $f \in \mathcal{H}$, we have
\[\left\|f - \mathcal{M}_{m, \mathcal{X}, \mathcal{Y}}f\right\| \leq \alpha\left\|f\right\| + \beta\left\|\mathcal{M}_{m, \mathcal{X}, \mathcal{Y}}f\right\|.\]
Then $\left(\mathcal{X}, \mathcal{X}\right)$ is a continuous biframe for $\mathcal{H}$.
\end{theorem}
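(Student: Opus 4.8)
The plan is to reduce this statement to the preceding result, Theorem~\ref{2.thm2.22}, which asserts that $(\mathcal{X}, \mathcal{X})$ is a continuous biframe for $\mathcal{H}$ as soon as the multiplier admits a lower bound of the form $\|\mathcal{M}_{m, \mathcal{X}, \mathcal{Y}}f\| \geq \alpha' \|f\|$ for some strictly positive constant $\alpha'$ and all $f \in \mathcal{H}$. Thus the entire task is to manufacture such a bound from the given ``closeness to the identity'' hypothesis.

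First I would apply the reverse triangle inequality to the left-hand side of the assumed estimate: for each $f \in \mathcal{H}$,
\[
\|f\| - \|\mathcal{M}_{m, \mathcal{X}, \mathcal{Y}}f\| \leq \|f - \mathcal{M}_{m, \mathcal{X}, \mathcal{Y}}f\| \leq \alpha\|f\| + \beta\|\mathcal{M}_{m, \mathcal{X}, \mathcal{Y}}f\|.
\]
Next I would collect the terms involving $\|f\|$ and those involving $\|\mathcal{M}_{m, \mathcal{X}, \mathcal{Y}}f\|$ on opposite sides, which gives $(1 - \alpha)\|f\| \leq (1 + \beta)\|\mathcal{M}_{m, \mathcal{X}, \mathcal{Y}}f\|$. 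Since $1 + \beta > 0$ by hypothesis, dividing through produces the sought lower bound
\[
\|\mathcal{M}_{m, \mathcal{X}, \mathcal{Y}}f\| \geq \frac{1 - \alpha}{1 + \beta}\,\|f\|.
\]

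Finally, the two standing assumptions $1 - \alpha > 0$ and $1 + \beta > 0$ guarantee that the constant $\alpha' := \frac{1 - \alpha}{1 + \beta}$ is strictly positive, so Theorem~\ref{2.thm2.22} applies directly with this choice of $\alpha'$ and yields the conclusion that $(\mathcal{X}, \mathcal{X})$ is a continuous biframe for $\mathcal{H}$. I do not expect any genuine obstacle here: the argument is a one-line reverse-triangle-inequality estimate followed by an invocation of the previous theorem, and the only point requiring care is confirming the positivity of $\alpha'$, which is precisely what the hypotheses $1 - \alpha > 0$ and $1 + \beta > 0$ are designed to secure.
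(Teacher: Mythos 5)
Your proposal is correct and follows essentially the same route as the paper: both start from the reverse triangle inequality to obtain $(1-\alpha)\|f\| \leq (1+\beta)\|\mathcal{M}_{m,\mathcal{X},\mathcal{Y}}f\|$, and then convert this lower bound on the multiplier into the lower biframe bound for $(\mathcal{X},\mathcal{X})$. The only cosmetic difference is that the paper re-runs the estimate from inequality~(\ref{en4.22}) inline (obtaining the explicit bound $\bigl(\tfrac{1-\alpha}{\|m\|_{\infty}\sqrt{D_{2}}(1+\beta)}\bigr)^{2}$), whereas you invoke Theorem~\ref{2.thm2.22}, whose proof is exactly that computation.
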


\begin{proof}
For every $f \in \mathcal{H}$, we have:
\begin{align*}
\left\|f\right\| - \left\|\mathcal{M}_{m, \mathcal{X}, \mathcal{Y}}f\right\|&\leq\left\|f - \mathcal{M}_{m, \mathcal{X}, \mathcal{Y}}f\right\|\\
& \leq \alpha\left\|f\right\| + \beta\left\|\mathcal{M}_{m, \mathcal{X}, \mathcal{Y}}f\right.\|.\\
\end{align*}
Hence,
$$\left(1 - \alpha\right)\left\|f\right\|
\leq \left(1 + \beta\right)\left\|\mathcal{M}_{m, \mathcal{X}, \mathcal{Y}}f\right\|.$$ 

Now, using (\ref{en4.22}), we obtain:
\begin{align*}
\left(\dfrac{1 - \alpha}{1 + \beta}\right)\left\|f\right\|&\leq\left\|\mathcal{M}_{m, \mathcal{X}, \mathcal{Y}}f\right\|\\
&\leq \|m\|_{\infty}\sqrt{D_{2}}\Vert\left(\int_{\Omega}\langle  f, \mathcal{X}(\omega)\rangle_{\mathcal{A}} \langle  \mathcal{X}(\omega),f \rangle_{\mathcal{A}} d\mu\right)^{\frac{1}{2}}\Vert.
\end{align*}
Therefore,
\begin{equation}\label{en4.24}
\left(\dfrac{1 - \alpha}{\|m\|_{\infty}\sqrt{D_{2}}\left(1 + \beta\right)}\right)^{2}\langle f, f\rangle_{\mathcal{A}}\leq\int_{\Omega}\langle  f, \mathcal{X}(\omega)\rangle_{\mathcal{A}} \langle  \mathcal{X}(\omega), f\rangle_{\mathcal{A}} d\mu\leq D_{1}\langle f, f\rangle_{\mathcal{A}}.
\end{equation}

Therefore, $\left(\mathcal{X}, \mathcal{X}\right)$ is a continuous biframe for $\mathcal{H}$ with bounds $\left(\dfrac{1 - \alpha}{\|m\|_{\infty}\sqrt{D_{2}}\left(1 + \beta\right)}\right)^{2}$ and $D_{1}$.
\end{proof}

\begin{theorem}
Let $\mathcal{M}_{m, \mathcal{X}, \mathcal{Y}}$ be the continuous biframe Bessel multiplier of $\mathcal{X}$ and $\mathcal{Y}$ with respect to $m$. Suppose $\alpha \in [0,1)$ such that for each $f \in \mathcal{H}$, we have
\[\left\|f - \mathcal{M}_{m, \mathcal{X}, \mathcal{Y}}f\right\| \leq \alpha\left\|f\right\|\] and \[\left\|f - \mathcal{M}^{\ast}_{m, \mathcal{X}, \mathcal{Y}}f\right\| \leq \beta\left\|f\right\|.\]
Then $(\mathcal{X}, \mathcal{X})$ and $\left(\mathcal{Y}, \mathcal{Y}\right)$ are continuous biframes for $\mathcal{H}$.
\end{theorem}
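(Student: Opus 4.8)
The plan is to treat the two conclusions symmetrically: the lower frame bound for $(\mathcal{X}, \mathcal{X})$ will be extracted from the first hypothesis and the lower bound for $(\mathcal{Y}, \mathcal{Y})$ from the second, while the upper (Bessel) bounds $D_1$ and $D_2$ are already in hand, since by the very definition of the multiplier $\mathcal{M}_{m, \mathcal{X}, \mathcal{Y}}$ the pairs $(\mathcal{X}, \mathcal{X})$ and $(\mathcal{Y}, \mathcal{Y})$ are continuous biframe Bessel mappings with those bounds. So each conclusion reduces to producing a positive lower bound.

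First I would handle $(\mathcal{X}, \mathcal{X})$. Starting from $\left\|f - \mathcal{M}_{m, \mathcal{X}, \mathcal{Y}}f\right\| \leq \alpha\left\|f\right\|$ and applying the reverse triangle inequality, I would obtain $(1 - \alpha)\left\|f\right\| \leq \left\|\mathcal{M}_{m, \mathcal{X}, \mathcal{Y}}f\right\|$, where $1 - \alpha > 0$ because $\alpha \in [0,1)$. Inserting the upper estimate (\ref{en4.22}) for $\left\|\mathcal{M}_{m, \mathcal{X}, \mathcal{Y}}f\right\|$, squaring, and rearranging would yield a lower bound of the form $\left(\dfrac{1-\alpha}{\|m\|_\infty \sqrt{D_2}}\right)^2 \langle f, f\rangle_{\mathcal{A}} \leq \int_\Omega \langle f, \mathcal{X}(\omega)\rangle_{\mathcal{A}} \langle \mathcal{X}(\omega), f\rangle_{\mathcal{A}}\,d\mu$. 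Combined with the Bessel upper bound $D_1$, this exhibits $(\mathcal{X}, \mathcal{X})$ as a continuous biframe.

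Next, for $(\mathcal{Y}, \mathcal{Y})$, I would repeat the argument with the adjoint multiplier. From $\left\|f - \mathcal{M}^{\ast}_{m, \mathcal{X}, \mathcal{Y}}f\right\| \leq \beta\left\|f\right\|$ and the reverse triangle inequality I would get $(1 - \beta)\left\|f\right\| \leq \left\|\mathcal{M}^{\ast}_{m, \mathcal{X}, \mathcal{Y}}f\right\|$, and then use the companion estimate (\ref{en4.23}), which bounds $\left\|\mathcal{M}^{\ast}_{m, \mathcal{X}, \mathcal{Y}}f\right\|$ by $\|m\|_\infty\sqrt{D_1}$ times the square root of the $\mathcal{Y}$-integral. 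Squaring and rearranging would give $\left(\dfrac{1-\beta}{\|m\|_\infty \sqrt{D_1}}\right)^2 \langle f, f\rangle_{\mathcal{A}} \leq \int_\Omega \langle f, \mathcal{Y}(\omega)\rangle_{\mathcal{A}} \langle \mathcal{Y}(\omega), f\rangle_{\mathcal{A}}\,d\mu$, and the Bessel bound $D_2$ then completes the proof that $(\mathcal{Y}, \mathcal{Y})$ is a continuous biframe.

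The point requiring the most care is the passage from the scalar norm inequality $c\left\|f\right\|^2 \leq \left\|\int_\Omega \langle f, \mathcal{X}(\omega)\rangle_{\mathcal{A}}\langle \mathcal{X}(\omega), f\rangle_{\mathcal{A}}\,d\mu\right\|$ to the $\mathcal{A}$-valued operator inequality $c\langle f, f\rangle_{\mathcal{A}} \leq \int_\Omega \langle f, \mathcal{X}(\omega)\rangle_{\mathcal{A}}\langle \mathcal{X}(\omega), f\rangle_{\mathcal{A}}\,d\mu$ demanded by Definition \ref{def1.01}; this is precisely the step performed in Theorem \ref{2.th2.4.16}, so I would follow the same convention there. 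I would also make explicit the standing assumption that $\beta \in [0,1)$, since the statement records only $\alpha \in [0,1)$ yet the second half of the argument genuinely needs $1 - \beta > 0$ for the reverse triangle inequality to deliver a positive lower bound.
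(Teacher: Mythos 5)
Your proof is correct, and for the $(\mathcal{X},\mathcal{X})$ half it is essentially identical to the paper's: the paper simply cites (\ref{en4.24}) with $\beta=0$, and (\ref{en4.24}) was itself derived from the reverse triangle inequality together with (\ref{en4.22}), which is exactly your computation; both arrive at the lower bound $\left(\frac{1-\alpha}{\|m\|_{\infty}\sqrt{D_{2}}}\right)^{2}$. For the $(\mathcal{Y},\mathcal{Y})$ half your route genuinely differs from the paper's, and is in fact the sounder one. The paper begins with the inequality $\|f\|^{2}-\|\mathcal{M}^{\ast}_{m,\mathcal{X},\mathcal{Y}}f\|^{2}\le\|f-\mathcal{M}^{\ast}_{m,\mathcal{X},\mathcal{Y}}f\|^{2}$, bounds the right-hand side by $\|I-\mathcal{M}^{\ast}_{m,\mathcal{X},\mathcal{Y}}\|^{2}\langle f,f\rangle_{\mathcal{A}}\le\beta^{2}\langle f,f\rangle_{\mathcal{A}}$, and concludes $(1-\beta^{2})\langle f,f\rangle_{\mathcal{A}}\le\|\mathcal{M}^{\ast}_{m,\mathcal{X},\mathcal{Y}}f\|^{2}$ before invoking (\ref{en4.23}). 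But the opening inequality $\|f\|^{2}-\|g\|^{2}\le\|f-g\|^{2}$ is false in general, and so is the constant it produces: if $\mathcal{M}^{\ast}_{m,\mathcal{X},\mathcal{Y}}=(1-\beta)I$, the hypothesis $\|f-\mathcal{M}^{\ast}_{m,\mathcal{X},\mathcal{Y}}f\|\le\beta\|f\|$ holds with equality, yet $\|\mathcal{M}^{\ast}_{m,\mathcal{X},\mathcal{Y}}f\|^{2}=(1-\beta)^{2}\|f\|^{2}<(1-\beta^{2})\|f\|^{2}$ for $\beta\in(0,1)$. Your reverse-triangle argument yields the smaller but actually justified constant $(1-\beta)^{2}/(\|m\|_{\infty}^{2}D_{1})$, which is all the theorem requires, since it asserts no explicit bounds. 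Finally, the two caveats you flag are well taken and apply equally to the paper's own proof: one must assume $\beta\in[0,1)$ (the statement restricts only $\alpha$, yet the paper's argument also needs $1-\beta^{2}>0$), and both arguments pass from a scalar inequality $c\,\|f\|^{2}\le\left\|\int_{\Omega}\langle f,\mathcal{Y}(\omega)\rangle_{\mathcal{A}}\langle\mathcal{Y}(\omega),f\rangle_{\mathcal{A}}\,d\mu\right\|$ to the $\mathcal{A}$-valued inequality $c\,\langle f,f\rangle_{\mathcal{A}}\le\int_{\Omega}\langle f,\mathcal{Y}(\omega)\rangle_{\mathcal{A}}\langle\mathcal{Y}(\omega),f\rangle_{\mathcal{A}}\,d\mu$ demanded by the definition of a continuous biframe; since for positive elements of a $C^{\ast}$-algebra $\|a\|\le\|b\|$ does not imply $a\le b$, this step is a genuine gap, but it is the paper's gap as much as yours.
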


\begin{proof}
Putting $\beta = 0$ in (\ref{en4.24}), we get
\[\left(\dfrac{1 - \alpha}{\|m\|_{\infty}\sqrt{D_{2}}}\right)^{2}\langle f, f\rangle_{\mathcal{A}}\leq\int_{\Omega}\langle  f, \mathcal{X}(\omega)\rangle_{\mathcal{A}} \langle  \mathcal{X}(\omega), f\rangle_{\mathcal{A}} d\mu.\]
Thus, $\left(\mathcal{X},\mathcal{X}\right)$ is a continuous biframe for $\mathcal{H}$.

On the other hand, for every  $f \in \mathcal{H}$, we have
\begin{align*}
\left\|f\right\|^{2} - \left\|\mathcal{M}^{\ast}_{m, \mathcal{X}, \mathcal{Y}}f\right\|^{2}&\leq\left\|f - \mathcal{M}_{m, \mathcal{X}, \mathcal{Y}}^{\ast}f\right\|^{2}\\
 &= \langle\left(I - \mathcal{M}_{m, \mathcal{X}, \mathcal{Y}}\right)^{\ast}f,\left(I - \mathcal{M}_{m, \mathcal{X}, \mathcal{Y}}\right)^{\ast}f \rangle_{\mathcal{A}}\\
&\leq\left\|I - \mathcal{M}^{\ast}_{m, \mathcal{X}, \mathcal{Y}}\right\|^{2}\langle f, f\rangle_{\mathcal{A}} \\
&\leq \beta^{2}\langle f, f\rangle_{\mathcal{A}}.\\
\end{align*}
Hence,
$$\left(1 - \beta^{2}\right)\langle f, f\rangle_{\mathcal{A}} \leq \left\|\mathcal{M}_{m, \mathcal{X}, \mathcal{Y}}^{\ast}f\right\|^{2}. $$ 

Now, using (\ref{en4.23}), we get
\[\dfrac{1 - \beta^{2}}{\|m\|^{2}_{\infty}D_{1}}\langle f, f\rangle_{\mathcal{A}}\leq \int_{\Omega}\langle  f, \mathcal{Y}(\omega)\rangle_{\mathcal{A}} \langle  \mathcal{Y}(\omega), f\rangle_{\mathcal{A}} d\mu.\] 
Thus, $\left(\mathcal{Y}, \mathcal{Y}\right)$ is a continuous biframe for $\mathcal{H}$ with bounds $\dfrac{1 - \beta^{2}}{\|m\|^{2}_{\infty}D_{1}}$ and $D_{2}$.
\end{proof}

\begin{definition}
Let $(\mathcal{X}, \mathcal{Y})$ be a continuous biframe for $\mathcal{H}$. Then $(\mathcal{X}, \mathcal{Y})$ is called a dual continuous biframe for $\mathcal{H}$. If 
\begin{align*}
&\langle  f, g\rangle_{\mathcal{A}}  = \int_{\Omega}\langle  f, \mathcal{X}(\omega)\rangle_{\mathcal{A}} \langle  \mathcal{Y}(\omega), g\rangle_{\mathcal{A}} d\mu. \quad \left(\forall f, g \in \mathcal{H}\right).
\end{align*} 

\end{definition}  

\begin{theorem}
Let $(\mathcal{X}, \mathcal{Y})$ be a continuous biframe for $\mathcal{H}$ with continuous biframe operator $S_{\mathcal{X}, \mathcal{Y}}$. Then $\left(S^{- 1}_{\mathcal{X}, \mathcal{Y}}\mathcal{X}, \mathcal{Y}\right)$ and $\left(\mathcal{X}, S^{- 1}_{\mathcal{X}, \mathcal{Y}}\mathcal{Y}\right)$ are dual continuous biframes for $\mathcal{H}$.  
\end{theorem}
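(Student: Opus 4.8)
The plan is to handle the two assertions in parallel, abbreviating $S := S_{\mathcal{X}, \mathcal{Y}}$. Recall from the discussion preceding the statement that $S$ is positive, invertible with bounded inverse, and (under the standing assumption of this section) self-adjoint; consequently $S^{-1} \in \operatorname{End}_\mathcal{A}^*(\mathcal{H})$ is itself self-adjoint, since $(S^{-1})^* = (S^*)^{-1} = S^{-1}$. The weak-measurability requirements are immediate: for every $f \in \mathcal{H}$ the map $\omega \mapsto \langle f, S^{-1}\mathcal{X}(\omega)\rangle_{\mathcal{A}} = \langle S^{-1}f, \mathcal{X}(\omega)\rangle_{\mathcal{A}}$ is measurable because $\mathcal{X}$ is weakly measurable and $S^{-1}f \in \mathcal{H}$, and likewise for $S^{-1}\mathcal{Y}$.

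For the pair $(S^{-1}\mathcal{X}, \mathcal{Y})$, the key step is to push $S^{-1}$ off the first entry by self-adjointness, $\langle f, S^{-1}\mathcal{X}(\omega)\rangle_{\mathcal{A}} = \langle S^{-1}f, \mathcal{X}(\omega)\rangle_{\mathcal{A}}$, and then recognise the defining integral of the biframe operator evaluated at $S^{-1}f$:
\[
\int_{\Omega}\langle f, S^{-1}\mathcal{X}(\omega)\rangle_{\mathcal{A}} \langle \mathcal{Y}(\omega), g\rangle_{\mathcal{A}} \, d\mu = \int_{\Omega}\langle S^{-1}f, \mathcal{X}(\omega)\rangle_{\mathcal{A}} \langle \mathcal{Y}(\omega), g\rangle_{\mathcal{A}} \, d\mu = \langle S(S^{-1}f), g\rangle_{\mathcal{A}} = \langle f, g\rangle_{\mathcal{A}},
\]
which is exactly the dual identity. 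Setting $g = f$ gives $\langle f, f\rangle_{\mathcal{A}} = \int_{\Omega}\langle f, S^{-1}\mathcal{X}(\omega)\rangle_{\mathcal{A}} \langle \mathcal{Y}(\omega), f\rangle_{\mathcal{A}} \, d\mu$, so $(S^{-1}\mathcal{X}, \mathcal{Y})$ is in fact a Parseval continuous biframe (bounds $A = B = 1$), confirming that it is a continuous biframe as the definition of a dual requires.

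For the pair $(\mathcal{X}, S^{-1}\mathcal{Y})$ I would instead move $S^{-1}$ off the second entry, $\langle S^{-1}\mathcal{Y}(\omega), g\rangle_{\mathcal{A}} = \langle \mathcal{Y}(\omega), S^{-1}g\rangle_{\mathcal{A}}$, and again read off the biframe operator:
\[
\int_{\Omega}\langle f, \mathcal{X}(\omega)\rangle_{\mathcal{A}} \langle S^{-1}\mathcal{Y}(\omega), g\rangle_{\mathcal{A}} \, d\mu = \int_{\Omega}\langle f, \mathcal{X}(\omega)\rangle_{\mathcal{A}} \langle \mathcal{Y}(\omega), S^{-1}g\rangle_{\mathcal{A}} \, d\mu = \langle Sf, S^{-1}g\rangle_{\mathcal{A}} = \langle f, g\rangle_{\mathcal{A}},
\]
where the final equality uses $\langle Sf, S^{-1}g\rangle_{\mathcal{A}} = \langle f, S^*S^{-1}g\rangle_{\mathcal{A}} = \langle f, g\rangle_{\mathcal{A}}$ together with $S^* = S$. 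As before, taking $g = f$ yields the Parseval continuous biframe property, so the definition of dual continuous biframe is met.

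I expect the only genuine obstacle to be the repeated appeal to self-adjointness of $S$: it is precisely what lets me transfer $S^{-1}$ across the $\mathcal{A}$-valued inner product and what collapses $\langle Sf, S^{-1}g\rangle_{\mathcal{A}}$ to $\langle f, g\rangle_{\mathcal{A}}$. Were $S$ merely invertible, the first computation would produce $\langle S(S^*)^{-1}f, g\rangle_{\mathcal{A}}$ and the second $\langle f, S^*S^{-1}g\rangle_{\mathcal{A}}$, neither of which reduces to $\langle f, g\rangle_{\mathcal{A}}$ in general. Thus the argument rests on the standing hypothesis $S_{\mathcal{X}, \mathcal{Y}} = S_{\mathcal{Y}, \mathcal{X}}$ recorded earlier, together with the elementary fact that the inverse of a self-adjoint invertible adjointable operator is again self-adjoint; everything else is direct substitution into the definition of $S_{\mathcal{X}, \mathcal{Y}}$.
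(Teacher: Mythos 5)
Your proof is correct and follows essentially the same route as the paper's: both rest on the standing assumption that $S_{\mathcal{X},\mathcal{Y}}$ is self-adjoint, transfer $S^{-1}_{\mathcal{X},\mathcal{Y}}$ across the $\mathcal{A}$-valued inner product, and invoke the weak definition of the biframe operator to collapse the integral to $\langle f,g\rangle_{\mathcal{A}}$ (the paper does this via its reconstruction formulas $f = S_{\mathcal{X},\mathcal{Y}}S^{-1}_{\mathcal{X},\mathcal{Y}}f = S^{-1}_{\mathcal{X},\mathcal{Y}}S_{\mathcal{X},\mathcal{Y}}f$ stated just before the theorem). Your write-up is in fact more complete than the paper's, since you also verify weak measurability and observe that the resulting pairs are Parseval continuous biframes, which is what the definition of a dual continuous biframe formally requires.
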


\begin{proof}
For every $f, g \in \mathcal{H}$, we have 
\begin{align*}
&\langle  f, g\rangle_{\mathcal{A}}  = \int_{\Omega}\langle  f, S^{- 1}_{\mathcal{X}, \mathcal{Y}}\mathcal{X}(\omega)\rangle_{\mathcal{A}} \langle  \mathcal{Y}(\omega), g\rangle_{\mathcal{A}} d\mu,\\
&\langle  f, g\rangle_{\mathcal{A}}  = \int_{\Omega}\langle  f, \mathcal{X}(\omega)\rangle_{\mathcal{A}} \langle  S^{- 1}_{\mathcal{X}, \mathcal{Y}}\mathcal{Y}(\omega), g\rangle_{\mathcal{A}} d\mu.
\end{align*}
This confirms that $\left(S^{- 1}_{\mathcal{X}, \mathcal{Y}}\mathcal{X}, \mathcal{Y}\right)$ and $\left(\mathcal{X}, S^{- 1}_{\mathcal{X}, \mathcal{Y}}\mathcal{Y}\right)$ are dual continuous biframes for $\mathcal{H}$.   
\end{proof}

In the forthcoming theorem, we establish a dual continuous biframe for $\mathcal{H}$ in terms of the multiplier operator.
\begin{theorem}
Let $\mathcal{M}_{m, \mathcal{X}, \mathcal{Y}}$ be invertible with $\mathcal{M}^{- 1}_{m, \mathcal{X}, \mathcal{Y}}\in \operatorname{End}_\mathcal{A}^*(\mathcal{H})$ and $(\mathcal{X}, \mathcal{Y})$ be a continuous biframe for $\mathcal{H}$. Then $\left(\overline{m}\left(\mathcal{M}^{- 1}_{m, \mathcal{X}, \mathcal{Y}}\right)^{\ast}\mathcal{X}, \mathcal{Y}\right)$ is a dual continuous biframe for $\mathcal{H}$.   
\end{theorem}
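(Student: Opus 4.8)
The plan is to verify the two requirements hidden in the definition of a dual continuous biframe: first, that the pair $\bigl(\overline{m}(\mathcal{M}^{-1}_{m,\mathcal{X},\mathcal{Y}})^{\ast}\mathcal{X},\ \mathcal{Y}\bigr)$ is itself a continuous biframe for $\mathcal{H}$, and second, that it reproduces the inner product, i.e.
\[
\langle f,g\rangle_{\mathcal{A}}=\int_{\Omega}\bigl\langle f,\ \overline{m}(\omega)\bigl(\mathcal{M}^{-1}_{m,\mathcal{X},\mathcal{Y}}\bigr)^{\ast}\mathcal{X}(\omega)\bigr\rangle_{\mathcal{A}}\langle \mathcal{Y}(\omega),g\rangle_{\mathcal{A}}\,d\mu
\]
for all $f,g\in\mathcal{H}$. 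Both will follow from a single computation of the integrand, so I would abbreviate $\mathcal{M}:=\mathcal{M}_{m,\mathcal{X},\mathcal{Y}}$ and focus on simplifying $\bigl\langle f,\ \overline{m}(\omega)(\mathcal{M}^{-1})^{\ast}\mathcal{X}(\omega)\bigr\rangle_{\mathcal{A}}$.

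For the key computation, note that $\overline{m}(\omega)$ is a complex scalar and the $\mathcal{A}$-valued inner product is conjugate-linear in its second argument (from axioms (2) and (3) of Definition 1.1, using $(\lambda a)^{\ast}=\overline{\lambda}\,a^{\ast}$); pulling the scalar out of the second slot therefore turns $\overline{m}(\omega)$ back into $m(\omega)$, giving
\[
\bigl\langle f,\ \overline{m}(\omega)(\mathcal{M}^{-1})^{\ast}\mathcal{X}(\omega)\bigr\rangle_{\mathcal{A}}=m(\omega)\bigl\langle f,\ (\mathcal{M}^{-1})^{\ast}\mathcal{X}(\omega)\bigr\rangle_{\mathcal{A}}.
\]
Since $\mathcal{M}^{-1}\in\operatorname{End}_{\mathcal{A}}^{\ast}(\mathcal{H})$, the operator $(\mathcal{M}^{-1})^{\ast}$ is adjointable with adjoint $\mathcal{M}^{-1}$, so adjointability moves it across the inner product: $\langle f,(\mathcal{M}^{-1})^{\ast}\mathcal{X}(\omega)\rangle_{\mathcal{A}}=\langle \mathcal{M}^{-1}f,\mathcal{X}(\omega)\rangle_{\mathcal{A}}$. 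Substituting this and integrating against $\langle\mathcal{Y}(\omega),g\rangle_{\mathcal{A}}$, the defining formula of the multiplier applied to the vector $\mathcal{M}^{-1}f$ would yield
\[
\int_{\Omega}\bigl\langle f,\ \overline{m}(\omega)(\mathcal{M}^{-1})^{\ast}\mathcal{X}(\omega)\bigr\rangle_{\mathcal{A}}\langle\mathcal{Y}(\omega),g\rangle_{\mathcal{A}}\,d\mu=\langle \mathcal{M}\mathcal{M}^{-1}f,g\rangle_{\mathcal{A}}=\langle f,g\rangle_{\mathcal{A}},
\]
which is precisely the dual reproducing identity.

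Finally, setting $g=f$ in the identity above gives $\int_{\Omega}\bigl\langle f,\ \overline{m}(\omega)(\mathcal{M}^{-1})^{\ast}\mathcal{X}(\omega)\bigr\rangle_{\mathcal{A}}\langle\mathcal{Y}(\omega),f\rangle_{\mathcal{A}}\,d\mu=\langle f,f\rangle_{\mathcal{A}}$, so the pair automatically satisfies the biframe inequality (\ref{3.eqq3.11}) with $A=B=1$; together with weak measurability of $\omega\mapsto m(\omega)\langle\mathcal{M}^{-1}f,\mathcal{X}(\omega)\rangle_{\mathcal{A}}$ (a product of the measurable function $m$ and the measurable map $\omega\mapsto\langle\mathcal{M}^{-1}f,\mathcal{X}(\omega)\rangle_{\mathcal{A}}$), this shows the pair is in fact a Parseval continuous biframe, hence a continuous biframe. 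Combined with the reproducing identity, it is a dual continuous biframe, as claimed. The computation is short, so the only points demanding care — and where I expect the subtlety to lie — are the bookkeeping of the scalar $\overline{m}(\omega)$ becoming $m(\omega)$ as it leaves the second slot of the $C^{\ast}$-valued inner product, and the correct use of adjointability of $(\mathcal{M}^{-1})^{\ast}$; getting either the conjugation or the adjoint backwards would destroy the cancellation $\mathcal{M}\mathcal{M}^{-1}=I$ that drives the whole argument.
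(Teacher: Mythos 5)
Your proof is correct and follows essentially the same route as the paper's: both arguments hinge on writing $\langle f,g\rangle_{\mathcal{A}}=\langle \mathcal{M}_{m,\mathcal{X},\mathcal{Y}}\mathcal{M}^{-1}_{m,\mathcal{X},\mathcal{Y}}f,g\rangle_{\mathcal{A}}$, expanding via the weak definition of the multiplier applied to $\mathcal{M}^{-1}_{m,\mathcal{X},\mathcal{Y}}f$, moving $\bigl(\mathcal{M}^{-1}_{m,\mathcal{X},\mathcal{Y}}\bigr)^{\ast}$ across the inner product by adjointability, and absorbing the scalar $m(\omega)$ into the second slot as $\overline{m}(\omega)$. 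The only difference is that you also verify (by setting $g=f$) that the new pair satisfies the biframe inequality with $A=B=1$, i.e.\ is a Parseval continuous biframe --- a point the paper's proof leaves implicit even though the definition of a dual continuous biframe formally requires the pair to be a continuous biframe, so your version is in fact slightly more complete.
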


\begin{proof}
Given the definition of $\mathcal{M}_{m, \mathcal{X}, \mathcal{Y}}$, we can write
\begin{align*}
\langle  \mathcal{M}_{m, \mathcal{X}, \mathcal{Y}}f, g\rangle_{\mathcal{A}} 
& = \int_{\Omega}m(\omega) \langle  f,\mathcal{X}(\omega) \rangle_{\mathcal{A}} \langle  \mathcal{Y}(\omega), g\rangle_{\mathcal{A}} d\mu.
\end{align*}
Now, by substituting $f$ with $\mathcal{M}^{- 1}_{m, \mathcal{X}, \mathcal{Y}}f$, we obtain:
\begin{align*}
\langle  f, g\rangle_{\mathcal{A}}=\langle  \mathcal{M}_{m, \mathcal{X}, \mathcal{Y}}\mathcal{M}^{- 1}_{m, \mathcal{X}, \mathcal{Y}}f, g\rangle_{\mathcal{A}} 
& = \int_{\Omega}m(\omega) \langle  \mathcal{M}^{- 1}_{m, \mathcal{X}, \mathcal{Y}}f, \mathcal{X}(\omega)\rangle_{\mathcal{A}} \langle  \mathcal{Y}(\omega), g\rangle_{\mathcal{A}} d\mu\\
& = \int_{\Omega}m(\omega) \langle  f, \left(\mathcal{M}^{- 1}_{m, \mathcal{X}, \mathcal{Y}}\right)^{\ast}\mathcal{X}(\omega)\rangle_{\mathcal{A}} \langle  \mathcal{Y}(\omega), g\rangle_{\mathcal{A}} d\mu\\
&=\int_{\Omega}\langle  f,\overline{m}(\omega ) \left(\mathcal{M}^{- 1}_{m, \mathcal{X}, \mathcal{Y}}\right)^{\ast}\mathcal{X}(\omega)\rangle_{\mathcal{A}} \langle  \mathcal{Y}(\omega), g\rangle_{\mathcal{A}} d\mu.  
\end{align*}
This shows that, $\left(\overline{m}\left(\mathcal{M}^{- 1}_{m, \mathcal{X}, \mathcal{Y}}\right)^{\ast}\mathcal{X}, \mathcal{Y}\right)$ is a dual continuous biframe for $\mathcal{H}$.
\end{proof}
%%%%%%%%%%%%%%%%%%%%%%%%%%%%%%%%%%%
%%%%%%%%%%%%%%%%%%%%%%%%%%%%%%%%%%%

	\section{Continuous biframes in the tensor product}
\label{sec:biframes}

Suppose that $\mathcal{A}, \mathcal{B}$ are unital $C^{*}$-algebras and $\mathcal{A} \otimes \mathcal{B}$ is the completion of $\mathcal{A} \otimes_{\text {alg }} \mathcal{B}$ with the spatial norm. $\mathcal{A} \otimes \mathcal{B}$ is the spatial tensor product of $\mathcal{A}$ and $\mathcal{B}$, also suppose that $\mathcal{H}$ is a Hilbert $\mathcal{A}$ module and $\mathcal{K}$ is a Hilbert $\mathcal{B}$-module. We want to define $\mathcal{H} \otimes \mathcal{K}$ as a Hilbert $(\mathcal{A} \otimes \mathcal{B})$-module. Start by forming the algebraic tensor product $\mathcal{H} \otimes_{\text {alg }} \mathcal{K}$ of the vector spaces $\mathcal{H}, \mathcal{K}$ (over $\mathbb{C}$ ). This is a left module over $\left(\mathcal{A} \otimes_{a l g} \mathcal{B}\right)$ (the module action being given by $(a \otimes b)(x \otimes y)=a x \otimes b y(a \in \mathcal{A}, b \in \mathcal{B}, x \in\mathcal{H}, y \in \mathcal{K})$ ). For $\left(x_{1}, x_{2} \in \mathcal{H}, y_{1}, y_{2} \in \mathcal{K}\right)$ we define $\left\langle x_{1} \otimes y_{1}, x_{2} \otimes y_{2}\right\rangle_{\mathcal{A} \otimes \mathcal{B}}=\left\langle x_{1}, x_{2}\right\rangle_{\mathcal{A}} \otimes\left\langle y_{1}, y_{2}\right\rangle_{\mathcal{B}}$. We also know that for $z=\sum_{i=1}^{n} x_{i} \otimes y_{i}$ in $\mathcal{H} \otimes_{a l g} \mathcal{K}$ we have $\langle z, z\rangle_{\mathcal{A} \otimes \mathcal{B}}=\sum_{i, j}\left\langle x_{i}, x_{j}\right\rangle_{\mathcal{A}} \otimes\left\langle y_{i}, y_{j}\right\rangle_{\mathcal{B}} \geq 0$ and $\langle z, z\rangle_{\mathcal{A} \otimes \mathcal{B}}=0$ iff $z=0$. This extends by linearity to an $\left(\mathcal{A} \otimes_{a l g} \mathcal{B}\right)$-valued sesquilinear form on $\mathcal{H} \otimes_{a l g} \mathcal{K}$, which makes $\mathcal{H} \otimes_{a l g} \mathcal{K}$ into a semi-inner-product module over the pre- $C^{*}$-algebra $\left(\mathcal{A} \otimes_{\text {alg }} \mathcal{B}\right)$. The semi-inner-product on $\mathcal{H} \otimes_{\text {alg }} \mathcal{K}$ is actually an inner product, see \cite{Lance}. Then $\mathcal{H} \otimes_{a l g} \mathcal{K}$ is an inner-product module over the pre- $C^{*}$-algebra $\left(\mathcal{A} \otimes_{a l g} \mathcal{B}\right)$, and we can perform the double completion discussed in chapter 1 of \cite{Lance} to conclude that the completion $\mathcal{H} \otimes \mathcal{K}$ of $\mathcal{H} \otimes_{a l g} \mathcal{K}$ is a Hilbert $(\mathcal{A} \otimes \mathcal{B})$-module. We call $\mathcal{H} \otimes \mathcal{K}$ the exterior tensor product of $\mathcal{H}$ and $\mathcal{K}$. With $\mathcal{H}, \mathcal{K}$ as above, we wish to investigate the adjointable operators on $\mathcal{H} \otimes \mathcal{K}$. Suppose that $S \in E n d_{\mathcal{A}}^{*}(\mathcal{H})$ and $T \in E n d_{\mathcal{B}}^{*}(\mathcal{K})$. We define a linear operator $S \otimes T$ on $\mathcal{H} \otimes \mathcal{K}$ by $S \otimes T(x \otimes y)=S x \otimes T y\quad(x \in \mathcal{H}, y \in \mathcal{K})$. It is a routine verification that is $S^{*} \otimes T^{*}$ is the adjoint of $S \otimes T$, so in fact $S \otimes T \in E n d_{\mathcal{A} \otimes \mathcal{B}}^{*}(\mathcal{H} \otimes \mathcal{K})$. For more details see \cite{Davidson, Lance}. We note that if $a \in \mathcal{A}^{+}$and $b \in \mathcal{B}^{+}$, then $a \otimes b \in(\mathcal{A} \otimes \mathcal{B})^{+}$. Plainly if $a, b$ are Hermitian elements of $\mathcal{A}$ and $a \geq b$, then for every positive element $x$ of $\mathcal{B}$, we have $a \otimes x \geq b \otimes x$.

Now, we will start by defining the new concept of \emph{continuous biframes} in the tensor product setting.

\begin{definition}\label{def1.01}
	Let \(\M_1\) and \(\M_2\) be Hilbert \(\A\)-modules over a unital \(C^*\)-algebra \(\A\).
	We consider, \(\mathcal{H} := \M_1 \otimes_{\A} \M_2\).  
	A pair
	\[
	(\mathcal{X}, \mathcal{Y})
	= \bigl(\mathcal{X} : \Omega \to \mathcal{H},\;\mathcal{Y} : \Omega \to \mathcal{H}\bigr)
	\]
	is called a \emph{continuous biframe} for \(\mathcal{H}\) with respect to \((\Omega,\mu)\) if:
	
	\begin{itemize}
		\item[(i)] 
		\(\mathcal{X}, \mathcal{Y}\) are weakly-measurable; i.e., 
		for every pure tensor \(f\otimes g \in \M_1 \otimes_{\A} \M_2\), 
		the maps
		\[
		\omega \;\longmapsto\; \langle f\otimes g,\,\mathcal{X}(\omega)\rangle_{\A}
		\quad\text{and}\quad
		\omega \;\longmapsto\; \langle f\otimes g,\,\mathcal{Y}(\omega)\rangle_{\A}
		\]
		are measurable on \(\Omega\).
		
		\item[(ii)]
		There exist constants \(0 < A \le B < \infty\) such that,
		for all \(f\otimes g \in \M_1 \otimes_{\A} \M_2\),
		\[
		A\,\ip{f\otimes g}{\,f\otimes g}_{\A}
		\;\;\le\;\;
		\int_{\Omega}
		\langle f\otimes g,\;\mathcal{X}(\omega)\rangle_{\A}
		\,\langle \mathcal{Y}(\omega),\,f\otimes g\rangle_{\A}
		\,d\mu(\omega)
		\;\;\le\;\;
		B\,\ip{f\otimes g}{\,f\otimes g}_{\A}.
		\]
	\end{itemize}
	
	The constants \(A\) and \(B\) are the \emph{biframe bounds}.  
	If \(A=B\), then it is a \emph{tight continuous biframe}; if \(A=B=1\), a \emph{Parseval continuous biframe}.  
	If only the upper bound holds, it is a \emph{continuous biframe Bessel mapping} with Bessel bound~\(B\).
\end{definition}

\begin{theorem}\label{thm:biframe-Cstar}
	Let \((\Omega,\mu) = (\Omega_1 \times \Omega_2,\;\mu_1 \otimes \mu_2)\) be a product measure space, and let \(\M_1,\M_2\) be Hilbert \(\A\)-modules. Suppose
	\[
	\mathcal{X} = \mathcal{X}_1 \otimes_{\A} \mathcal{X}_2,
	\quad
	\mathcal{Y} = \mathcal{Y}_1 \otimes_{\A} \mathcal{Y}_2
	\;:\;
	\Omega \longrightarrow \M_1 \otimes_{\A} \M_2,
	\]
	where \(\mathcal{X}_1,\mathcal{Y}_1 : \Omega_1 \to \M_1\) 
	and \(\mathcal{X}_2,\mathcal{Y}_2 : \Omega_2 \to \M_2\).  
	Then the following are equivalent:
	
	\begin{itemize}
		\item[(a)]
		\(\bigl(\mathcal{X},\mathcal{Y}\bigr)\) is a continuous biframe for \(\M_1 \otimes_{\A} \M_2\) w.r.t.\ \((\Omega,\mu)\).  In other words, there exist \(A,B>0\) such that
		\[
		A\,\langle f\otimes g,\,f\otimes g\rangle_{\A}
		\;\;\le\;\;
		\int_{\Omega}
		\langle f\otimes g,\;\mathcal{X}(\omega)\rangle_{\A}\,
		\langle \mathcal{Y}(\omega),\;f\otimes g\rangle_{\A}
		\,d\mu(\omega)
		\;\;\le\;\;
		B\,\langle f\otimes g,\,f\otimes g\rangle_{\A},
		\]
		for all \(f\otimes g \in \M_1 \otimes_{\A} \M_2\).
		
		\item[(b)]
		\(\bigl(\mathcal{X}_1,\mathcal{Y}_1\bigr)\) is a continuous biframe for \(\M_1\) w.r.t.\ $(\Omega_1,\mu_1)$ and \(\bigl(\mathcal{X}_2,\mathcal{Y}_2\bigr)\) is a continuous biframe for \(\M_2\) w.r.t. 
        $(\Omega_2,\mu_2)$.
	\end{itemize}
	
	Moreover, if \(\bigl(\mathcal{X}_1,\mathcal{Y}_1\bigr)\) has biframe bounds \((A_1,B_1)\) and \(\bigl(\mathcal{X}_2,\mathcal{Y}_2\bigr)\) has bounds \((A_2,B_2)\), then \(\bigl(\mathcal{X},\mathcal{Y}\bigr)\) has biframe bounds \(A_1 A_2\) and \(B_1 B_2\).
\end{theorem}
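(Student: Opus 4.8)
The plan is to reduce the tensor-product biframe to two \emph{partial} biframe operators. Writing a point of $\Omega$ as $\omega=(\omega_1,\omega_2)$, I first introduce $S_1\in\operatorname{End}_{\mathcal{A}}^*(\M_1)$ and $S_2\in\operatorname{End}_{\mathcal{A}}^*(\M_2)$ weakly by
\[
\langle S_1 f,f\rangle_{\A}=\int_{\Omega_1}\langle f,\mathcal{X}_1(\omega_1)\rangle_{\A}\,\langle\mathcal{Y}_1(\omega_1),f\rangle_{\A}\,d\mu_1,\qquad
\langle S_2 h,h\rangle_{\A}=\int_{\Omega_2}\langle h,\mathcal{X}_2(\omega_2)\rangle_{\A}\,\langle\mathcal{Y}_2(\omega_2),h\rangle_{\A}\,d\mu_2,
\]
and I establish the factorization identity $S_{\mathcal{X},\mathcal{Y}}=S_1\otimesA S_2$. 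On a pure tensor $f\otimes g$ this follows by substituting $\mathcal{X}(\omega)=\mathcal{X}_1(\omega_1)\otimesA\mathcal{X}_2(\omega_2)$ and $\mathcal{Y}(\omega)=\mathcal{Y}_1(\omega_1)\otimesA\mathcal{Y}_2(\omega_2)$, using the multiplicativity of the tensor inner product $\langle f\otimes g,\mathcal{X}_1\otimes\mathcal{X}_2\rangle_{\A}=\langle f,\mathcal{X}_1\rangle_{\A}\otimes\langle g,\mathcal{X}_2\rangle_{\A}$ together with the tensor module action $(a\otimes b)(\mathcal{Y}_1\otimes\mathcal{Y}_2)=a\mathcal{Y}_1\otimes b\mathcal{Y}_2$, and then applying Fubini's theorem for the product measure $\mu_1\otimes\mu_2$ to separate the double integral into $\langle S_1 f,f\rangle_{\A}\otimes\langle S_2 g,g\rangle_{\A}$; the general case extends by bilinearity and continuity. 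The same factorization, read through Fubini, shows that weak-measurability of $\mathcal{X},\mathcal{Y}$ on $\Omega$ is equivalent to weak-measurability of each $\mathcal{X}_i,\mathcal{Y}_i$ on $\Omega_i$, so the measurability hypotheses of (a) and (b) match.

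For the implication $(b)\Rightarrow(a)$, I would fix $f\otimes g$ and set $u=\langle S_1 f,f\rangle_{\A}$ and $v=\langle S_2 g,g\rangle_{\A}$, both positive in $\A$. The factor biframe bounds read $A_1\langle f,f\rangle_{\A}\le u\le B_1\langle f,f\rangle_{\A}$ and $A_2\langle g,g\rangle_{\A}\le v\le B_2\langle g,g\rangle_{\A}$. Using the monotonicity of the tensor product on positive elements recorded at the end of the tensor-product preliminaries (if $a\ge b$ are Hermitian and $x\ge 0$, then $a\otimes x\ge b\otimes x$), I would chain
\[
A_1A_2\,\langle f,f\rangle_{\A}\otimes\langle g,g\rangle_{\A}\le A_1\langle f,f\rangle_{\A}\otimes v\le u\otimes v\le B_1\langle f,f\rangle_{\A}\otimes v\le B_1B_2\,\langle f,f\rangle_{\A}\otimes\langle g,g\rangle_{\A}.
\]
Since $u\otimes v=\langle S_{\mathcal{X},\mathcal{Y}}(f\otimes g),f\otimes g\rangle_{\A}$ equals the defining integral and $\langle f,f\rangle_{\A}\otimes\langle g,g\rangle_{\A}=\langle f\otimes g,f\otimes g\rangle_{\A}$, this is exactly inequality (a) with the advertised bounds $A_1A_2$ and $B_1B_2$.

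The hard direction is $(a)\Rightarrow(b)$: descending from a bound on $u\otimes v$ to separate bounds on $u$ and $v$. My approach is to test the element inequality $A\,\langle f,f\rangle_{\A}\otimes\langle g,g\rangle_{\A}\le u\otimes v\le B\,\langle f,f\rangle_{\A}\otimes\langle g,g\rangle_{\A}$ against product states $\phi\otimes\psi$ on $\A\otimes\A$, which are positive and hence preserve order, turning it into the scalar sandwich $A\,\phi(\langle f,f\rangle_{\A})\psi(\langle g,g\rangle_{\A})\le\phi(u)\psi(v)\le B\,\phi(\langle f,f\rangle_{\A})\psi(\langle g,g\rangle_{\A})$. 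Fixing a single $g_0$ and a state $\psi$ for which $\gamma:=\psi(\langle g_0,g_0\rangle_{\A})/\psi(\langle S_2 g_0,g_0\rangle_{\A})$ is a well-defined positive number, and then letting $\phi$ range over all states of $\A$, I recover $A\gamma\,\langle f,f\rangle_{\A}\le\langle S_1 f,f\rangle_{\A}\le B\gamma\,\langle f,f\rangle_{\A}$ for every $f$ (using that a self-adjoint element is positive precisely when all states are nonnegative on it), which is exactly the biframe property of $(\mathcal{X}_1,\mathcal{Y}_1)$; the argument for $(\mathcal{X}_2,\mathcal{Y}_2)$ is symmetric. I expect the main obstacle to be the bookkeeping that legitimizes $\gamma$: one must first rule out the degenerate case $S_2\equiv 0$ (impossible, since it would force $\langle f,f\rangle_{\A}\otimes\langle g,g\rangle_{\A}=0$ against the lower bound) and extract a preliminary Bessel bound for each factor from the cross-norm identity $\|a\otimes b\|=\|a\|\,\|b\|$ applied to the upper inequality, so that a state $\psi$ with both $\psi(\langle g_0,g_0\rangle_{\A})>0$ and $\psi(\langle S_2 g_0,g_0\rangle_{\A})>0$ can be selected. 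Equivalently, these constraints can be read spectrally as $\sigma(S_1\otimesA S_2)=\sigma(S_1)\,\sigma(S_2)\subseteq[A,B]$, which forces $\inf\sigma(S_i)>0$ and $\sup\sigma(S_i)<\infty$ for each $i$, giving the same conclusion.
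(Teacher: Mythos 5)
Your proof of (b) $\Rightarrow$ (a) is correct and is essentially the paper's own argument made precise: Fubini factorization of the double integral plus the monotonicity of the tensor product on positive elements recorded in the paper's preliminaries; your two-step chain through $A_1\langle f,f\rangle_{\mathcal{A}}\otimes v$ is a careful rendering of what the paper compresses into ``multiplying these inequalities,'' and it yields the stated bounds $A_1A_2$ and $B_1B_2$.

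The direction (a) $\Rightarrow$ (b), however, has a genuine gap, and it sits exactly at the step you flagged as bookkeeping: a state $\psi$ making $\gamma=\psi(\langle g_0,g_0\rangle_{\mathcal{A}})/\psi(\langle S_2g_0,g_0\rangle_{\mathcal{A}})$ a positive number need not exist, and the obstruction is neither the degenerate case $S_2\equiv 0$ nor a missing Bessel bound; it is an irremovable phase ambiguity, and in fact this implication is false as stated. The factorization $\mathcal{Y}=\mathcal{Y}_1\otimes_{\mathcal{A}}\mathcal{Y}_2$ determines the factors only up to reciprocal scalars, since $\lambda\mathcal{Y}_1\otimes_{\mathcal{A}}\lambda^{-1}\mathcal{Y}_2=\mathcal{Y}_1\otimes_{\mathcal{A}}\mathcal{Y}_2$; condition (a) sees only $\mathcal{X}$ and $\mathcal{Y}$, whereas condition (b) is destroyed by a non-real unimodular $\lambda$. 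Concretely, let $\mathcal{A}=\mathbb{C}$, $\mathcal{H}_1=\mathcal{H}_2=\mathbb{C}$, $\Omega_1=\Omega_2=[0,1]$ with Lebesgue measure, and take the constant maps $\mathcal{X}_1\equiv 1$, $\mathcal{X}_2\equiv 1$, $\mathcal{Y}_1\equiv i$, $\mathcal{Y}_2\equiv -i$. Then $\mathcal{X}=\mathcal{Y}\equiv 1\otimes 1$, so $(\mathcal{X},\mathcal{Y})$ is a Parseval continuous biframe for $\mathbb{C}\otimes\mathbb{C}\cong\mathbb{C}$, yet $\int_0^1\langle f,\mathcal{X}_1(\omega)\rangle\langle\mathcal{Y}_1(\omega),f\rangle\,d\omega=i|f|^2$ is not even self-adjoint, so $(\mathcal{X}_1,\mathcal{Y}_1)$ is not a continuous biframe, and neither is $(\mathcal{X}_2,\mathcal{Y}_2)$. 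What your product-state computation actually establishes is the phase-corrected statement: testing $u\otimes v$ against product states shows $\langle S_1f,f\rangle_{\mathcal{A}}$ and $\langle S_2g,g\rangle_{\mathcal{A}}$ are self-adjoint and positive only up to conjugate unimodular constants, so there is a single $\lambda$ with $|\lambda|=1$ such that $(\mathcal{X}_1,\bar{\lambda}\mathcal{Y}_1)$ and $(\mathcal{X}_2,\lambda\mathcal{Y}_2)$ are continuous biframes --- and no refinement of the bookkeeping can remove $\lambda$. The same phase defeats your spectral aside: $\sigma(S_1)\,\sigma(S_2)\subseteq[A,B]$ forces only $\sigma(S_1)\subseteq\lambda(0,\infty)$ and $\sigma(S_2)\subseteq\bar{\lambda}(0,\infty)$, not positivity of each factor. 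For comparison, the paper's own treatment of this direction is a bare assertion: after the Fubini factorization it simply writes ``hence $(\mathcal{X}_1,\mathcal{Y}_1)$ is a continuous biframe,'' with no mechanism at all for descending from the product bound to bounds on the factors, so it overlooks precisely the same phenomenon; your write-up at least isolates the true difficulty, but that difficulty is fatal to the claimed equivalence rather than surmountable.
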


\begin{proof}
	\textbf{\((a)\,\Rightarrow\,(b)\).}
	Assume \(\bigl(\mathcal{X},\mathcal{Y}\bigr)\) is a continuous biframe on \(\M_1\otimes_{\A}\M_2\) with bounds \(A,B\).  
	For any \(f\in\M_1\) and \(g\in\M_2\), we have
	\[
	A\,\ip{f\otimes g}{f\otimes g}_{\A}
	\;\le\;
	\int_{\Omega}
	\ip{\,f\otimes g}{\mathcal{X}(\omega)}_{\A}\,
	\ip{\,\mathcal{Y}(\omega)}{\,f\otimes g}_{\A}
	\,d\mu(\omega)
	\;\le\;
	B\,\ip{f\otimes g}{f\otimes g}_{\A}.
	\]
	By hypothesis, \(\mathcal{X}(\omega_1,\omega_2)=\mathcal{X}_1(\omega_1)\otimes_{\A}\mathcal{X}_2(\omega_2)\), and similarly for \(\mathcal{Y}\).  Since \(\mu=\mu_1\otimes\mu_2\), a Fubini‐type argument gives
	\[
	\int_{\Omega_1\times \Omega_2}
	\ip{\,f\otimes g}{\,\mathcal{X}_1(\omega_1)\otimes \mathcal{X}_2(\omega_2)}_{\A}\;
	\ip{\,\mathcal{Y}_1(\omega_1)\otimes \mathcal{Y}_2(\omega_2)}{\,f\otimes g}_{\A}
	\,d(\mu_1\otimes\mu_2)(\omega_1,\omega_2)
	\]
	\[
	=\;
	\Bigl(\int_{\Omega_1}
	\ip{f}{\mathcal{X}_1(\omega_1)}_{\M_1}\,\ip{\mathcal{Y}_1(\omega_1)}{f}_{\M_1}
	\,d\mu_1(\omega_1)\Bigr)
	\;\cdot\;
	\Bigl(\int_{\Omega_2}
	\ip{g}{\mathcal{X}_2(\omega_2)}_{\M_2}\,\ip{\mathcal{Y}_2(\omega_2)}{g}_{\M_2}
	\,d\mu_2(\omega_2)\Bigr).
	\]
	Since 
	\(\ip{f\otimes g}{\,f\otimes g}_{\A}=\ip{f}{f}_{\M_1}\,\ip{g}{g}_{\M_2},\)
	it follows that
\begin{small}
\[
	A\,\ip{f}{f}_{\M_1}\,\ip{g}{g}_{\M_2}
	\;\le\;
	\int_{\Omega_1}
	\ip{f}{\mathcal{X}_1(\omega_1)}_{\M_1}\,\ip{\mathcal{Y}_1(\omega_1)}{f}_{\M_1}
	\,d\mu_1(\omega_1)
	\;
	\int_{\Omega_2}
	\ip{g}{\mathcal{X}_2(\omega_2)}_{\M_2}\,\ip{\mathcal{Y}_2(\omega_2)}{g}_{\M_2}
	\,d\mu_2(\omega_2)
	\]
\end{small}
    $$\;\le\;
	B\,\ip{f}{f}_{\M_1}\,\ip{g}{g}_{\M_2}.$$
	Hence \(\bigl(\mathcal{X}_1,\mathcal{Y}_1\bigr)\) is a continuous biframe on \(\M_1\), and \(\bigl(\mathcal{X}_2,\mathcal{Y}_2\bigr)\) is a continuous biframe on \(\M_2\).  
	
	\medskip
	\noindent
	\textbf{\((b)\,\Rightarrow\,(a)\).}
	Conversely, assume \(\bigl(\mathcal{X}_1,\mathcal{Y}_1\bigr)\) is a continuous biframe on \(\M_1\) with bounds \((A_1,B_1)\), and \(\bigl(\mathcal{X}_2,\mathcal{Y}_2\bigr)\) is a continuous biframe on \(\M_2\) with bounds \((A_2,B_2)\).  Then
	\[
	A_1\,\ip{f}{f}_{\M_1}
	\;\le\;
	\int_{\Omega_1}
	\ip{f}{\mathcal{X}_1(\omega_1)}_{\M_1}\,\ip{\mathcal{Y}_1(\omega_1)}{f}_{\M_1}
	\,d\mu_1(\omega_1)
	\;\le\;
	B_1\,\ip{f}{f}_{\M_1},
	\]
	and similarly
	\[
	A_2\,\ip{g}{g}_{\M_2}
	\;\le\;
	\int_{\Omega_2}
	\ip{g}{\mathcal{X}_2(\omega_2)}_{\M_2}\,\ip{\mathcal{Y}_2(\omega_2)}{g}_{\M_2}
	\,d\mu_2(\omega_2)
	\;\le\;
	B_2\,\ip{g}{g}_{\M_2}.
	\]
	Multiplying these inequalities and applying a Fubini argument once again yields
    \begin{small}
        \[
	A_1A_2\,\ip{f\otimes g}{f\otimes g}_{\A}
	\le
	\int_{\Omega_1\times\Omega_2}
	\ip{f\otimes g}{\mathcal{X}_1(\omega_1)\otimes\mathcal{X}_2(\omega_2)}_{\A}\,
	\ip{\mathcal{Y}_1(\omega_1)\otimes\mathcal{Y}_2(\omega_2)}{f\otimes g}_{\A}
	\,d(\mu_1\otimes\mu_2)
	\le
	B_1B_2\,\ip{f\otimes g}{f\otimes g}_{\A}.
	\]
    \end{small}
	
	Thus \(\bigl(\mathcal{X}_1\otimes_\A \mathcal{X}_2,\;\mathcal{Y}_1\otimes_\A \mathcal{Y}_2\bigr)\) is a continuous biframe on \(\M_1\otimes_\A \M_2\).  
\end{proof}

\begin{example}
	\label{ex:finite-rank-biframe}
	Let $\A$ be a unital $C^*$-algebra. Define two finite-rank free Hilbert $\A$-modules:
	\[
	\M_1 \;=\;\A^2, 
	\qquad
	\M_2 \;=\;\A^3.
	\]
	They carry the standard $\A$-valued inner products,
	\[
	\langle x,y\rangle_{\M_i}
	\;=\;
	\sum_{k} \,x_k^*\,y_k.
	\]
	
	\medskip
	\noindent
	Let $(X_1,\mu_1)$ be partitioned into two disjoint measurable sets $\Omega_1,\Omega_2$, each of finite nonzero measure.  On each $\Omega_j\subset X_1$, define
	\[
	\mathcal{F}_1(\omega)
	= \tfrac{1}{\sqrt{\mu_1(\Omega_j)}}\,(1,0)^T,
	\quad
	\mathcal{G}_1(\omega)
	= \tfrac{1}{\sqrt{\mu_1(\Omega_j)}}\,(0,1)^T.
	\]
	(Or pick other standard vectors in $\A^2$ to ensure a nontrivial frame.)
	One can check, by integrating piecewise, that $(\mathcal{F}_1,\mathcal{G}_1)$ meets 
	\[
	A_1\,\ip{x}{x}_{\M_1} 
	\;\le\;
	\int_{X_1}
	\ip{x}{\mathcal{F}_1(\omega)}_{\M_1}\,\ip{\mathcal{G}_1(\omega)}{x}_{\M_1}
	\,d\mu_1(\omega)
	\;\le\;
	B_1\,\ip{x}{x}_{\M_1},
	\]
	for some $A_1,B_1>0$.  
	Hence $(\mathcal{F}_1,\mathcal{G}_1)$ is a continuous biframe in $\M_1.$  
	
	Similarly, let $(X_2,\mu_2)$ be partitioned into subsets $\Delta_1,\Delta_2,\Delta_3$, each nonempty and finite measure.  On $\Delta_j$, define vectors in $\A^3$ so that 
	\[
	\mathcal{F}_2(\omega') 
	= \tfrac{1}{\sqrt{\mu_2(\Delta_j)}}\,(1,0,0)^T, 
	\quad
	\mathcal{G}_2(\omega')
	= \tfrac{1}{\sqrt{\mu_2(\Delta_j)}}\,(0,1,0)^T,
	\]
	and so on.  By a similar calculation, $(\mathcal{F}_2,\mathcal{G}_2)$ is a continuous biframe in $\M_2$ with bounds $A_2,B_2$.
	
	\medskip
	\noindent
	Consider the product space 
	\[
	X = X_1\times X_2,
	\quad
	\mu = \mu_1\otimes\mu_2.
	\]
	Define
	\[
	\mathcal{F}(\omega_1,\omega_2) 
	= \mathcal{F}_1(\omega_1)\,\otimesA\, \mathcal{F}_2(\omega_2),
	\quad
	\mathcal{G}(\omega_1,\omega_2) 
	= \mathcal{G}_1(\omega_1)\,\otimesA\, \mathcal{G}_2(\omega_2).
	\]
	Then by Theorem~\ref{thm:biframe-Cstar}, $(\mathcal{F},\mathcal{G})$ is a continuous biframe in 
	$\M_1\otimesA \M_2 \cong \A^2 \otimesA \A^3 \cong \A^6$,
	with biframe bounds $A_1 A_2$ and $B_1 B_2$.  
	The integrals factor through a Fubini argument exactly as stated in the theorem, and we see how two continuous biframes in $\M_1$ and $\M_2$ produce a biframe in their tensor product.
\end{example}

\begin{definition}
	\label{def:biframe-operator-Cstar}
	Let 
	\(\bigl(\mathcal{X},\mathcal{Y}\bigr)\) 
	be a continuous biframe for \(\M_1 \otimes_{\A} \M_2\) 
	with respect to the measure space \(\bigl(\Omega,\mu\bigr)\).  
	Define the \emph{continuous biframe operator}
	\[
	S_{(\mathcal{X},\mathcal{Y})}
	\;\colon\;
	\M_1 \otimes_{\A} \M_2 
	\;\longrightarrow\;
	\M_1 \otimes_{\A} \M_2
	\]
	by
	\[
	S_{(\mathcal{X},\mathcal{Y})}\bigl(f\otimes g\bigr)
	\;=\;
	\int_{\Omega}
	\bigl\langle f\otimes g,\;\mathcal{X}(\omega)\bigr\rangle_{\A}
	\,\mathcal{Y}(\omega)
	\;d\mu(\omega),
	\]
	for all pure tensors \(f\otimes g\in \M_1\otimes_{\A}\M_2\), 
	and then extend by linearity and continuity to the whole module 
	\(\M_1\otimes_{\A}\M_2\).
\end{definition}

\begin{theorem}
	\label{thm:operator-factor}
	Suppose
	\[
	(\mathcal{X},\mathcal{Y})
	\;=\;
	\bigl(\,\mathcal{X}_1\otimes_{\A}\mathcal{X}_2,\;\mathcal{Y}_1\otimes_{\A}\mathcal{Y}_2\bigr)
	\]
	is a continuous biframe for \(\M_1\otimes_{\A}\M_2\).  
	Let 
	\(S_{(\mathcal{X},\mathcal{Y})}\)
	be the continuous biframe operator,  
	and similarly let 
	\(S_{(\mathcal{X}_1,\mathcal{Y}_1)}\)
	and 
	\(S_{(\mathcal{X}_2,\mathcal{Y}_2)}\)
	be the associated single‐module biframe operators on \(\M_1\) and \(\M_2\).  
	Then
	\[
	S_{(\mathcal{X},\mathcal{Y})}
	\;=\;
	S_{(\mathcal{X}_1,\mathcal{Y}_1)}\;\otimes_{\A}\;S_{(\mathcal{X}_2,\mathcal{Y}_2)}.
	\]
	
	\begin{proof}
		For any \(f\otimes g\in \M_1\otimes_{\A}\M_2\).  
		By definition,
		\[
		S_{(\mathcal{X},\mathcal{Y})}\bigl(f\otimes g\bigr)
		=\;
		\int_{\Omega}
		\bigl\langle f\otimes g,\;\mathcal{X}_1(\omega_1)\otimes_{\A}\mathcal{X}_2(\omega_2)\bigr\rangle_{\A}
		\,\bigl(\mathcal{Y}_1(\omega_1)\otimes_{\A}\mathcal{Y}_2(\omega_2)\bigr)
		\,d\bigl(\mu_1\otimes\mu_2\bigr)(\omega_1,\omega_2).
		\]
		A Fubini‐type argument, plus the factorization of \(\A\)‐valued inner products,
		\[
		\bigl\langle f\otimes g,\;\mathcal{X}_1(\omega_1)\otimes_{\A}\mathcal{X}_2(\omega_2)\bigr\rangle_{\A}
		\;=\;
		\bigl\langle f,\;\mathcal{X}_1(\omega_1)\bigr\rangle_{\M_1}
		\,\bigl\langle g,\;\mathcal{X}_2(\omega_2)\bigr\rangle_{\M_2},
		\]
		shows that
		\[
		S_{(\mathcal{X},\mathcal{Y})}\bigl(f\otimes g\bigr)
		=\;
		\Bigl(\int_{\Omega_1}
		\bigl\langle f,\;\mathcal{X}_1(\omega_1)\bigr\rangle_{\M_1}\,
		\mathcal{Y}_1(\omega_1)
		\,d\mu_1(\omega_1)\Bigr)
		\;\otimes_{\A}\;
		\Bigl(\int_{\Omega_2}
		\bigl\langle g,\;\mathcal{X}_2(\omega_2)\bigr\rangle_{\M_2}\,
		\mathcal{Y}_2(\omega_2)
		\,d\mu_2(\omega_2)\Bigr).
		\]
		But those two integrals are exactly
		\(S_{(\mathcal{X}_1,\mathcal{Y}_1)}(f)\in\M_1\)
		and
		\(S_{(\mathcal{X}_2,\mathcal{Y}_2)}(g)\in\M_2\).  
		Hence
		\[
		S_{(\mathcal{X},\mathcal{Y})}\bigl(f\otimes g\bigr)
		=\;
		\bigl(S_{(\mathcal{X}_1,\mathcal{Y}_1)}\otimes_{\A}S_{(\mathcal{X}_2,\mathcal{Y}_2)}\bigr)
		\bigl(f\otimes g\bigr).
		\]
		By linearity, this holds for all \(f\otimes g\in \M_1\otimes_{\A}\M_2\),  
		so
		\[
		S_{(\mathcal{X},\mathcal{Y})}
		\;=\;
		S_{(\mathcal{X}_1,\mathcal{Y}_1)}\;\otimes_{\A}\;S_{(\mathcal{X}_2,\mathcal{Y}_2)}.
		\]
	\end{proof}
\end{theorem}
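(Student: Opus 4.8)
The plan is to verify the operator identity on pure tensors first and then extend by linearity and continuity, mirroring the way $S_{(\mathcal{X},\mathcal{Y})}$ is defined. Fix $f\in\M_1$ and $g\in\M_2$. Starting from the defining formula for the biframe operator, I would substitute the assumed factorizations $\mathcal{X}(\omega_1,\omega_2)=\mathcal{X}_1(\omega_1)\otimesA\mathcal{X}_2(\omega_2)$ and $\mathcal{Y}(\omega_1,\omega_2)=\mathcal{Y}_1(\omega_1)\otimesA\mathcal{Y}_2(\omega_2)$, so that the integrand becomes
\[
\ip{f\otimes g}{\mathcal{X}_1(\omega_1)\otimesA\mathcal{X}_2(\omega_2)}_{\A}\,
\bigl(\mathcal{Y}_1(\omega_1)\otimesA\mathcal{Y}_2(\omega_2)\bigr).
\]

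Next I would invoke the factorization of the $\A$-valued inner product on the interior tensor product, $\ip{f\otimes g}{\mathcal{X}_1(\omega_1)\otimesA\mathcal{X}_2(\omega_2)}_{\A}=\ip{f}{\mathcal{X}_1(\omega_1)}_{\M_1}\,\ip{g}{\mathcal{X}_2(\omega_2)}_{\M_2}$, which turns the integrand into a coefficient (an element of $\A$) multiplying a pure tensor. Since this coefficient splits into one factor depending only on $\omega_1$ and one depending only on $\omega_2$, and $\mu=\mu_1\otimes\mu_2$, a Fubini-type argument for the Bochner integral separates the double integral into the tensor product $\bigl(\int_{\Omega_1}\ip{f}{\mathcal{X}_1(\omega_1)}_{\M_1}\mathcal{Y}_1(\omega_1)\,d\mu_1\bigr)\otimesA\bigl(\int_{\Omega_2}\ip{g}{\mathcal{X}_2(\omega_2)}_{\M_2}\mathcal{Y}_2(\omega_2)\,d\mu_2\bigr)$. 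By the definitions of the single-module biframe operators, these two factors are precisely $S_{(\mathcal{X}_1,\mathcal{Y}_1)}(f)$ and $S_{(\mathcal{X}_2,\mathcal{Y}_2)}(g)$, so on each pure tensor $S_{(\mathcal{X},\mathcal{Y})}(f\otimes g)=\bigl(S_{(\mathcal{X}_1,\mathcal{Y}_1)}\otimesA S_{(\mathcal{X}_2,\mathcal{Y}_2)}\bigr)(f\otimes g)$.

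Finally, since both sides are bounded $\A$-linear operators—the right-hand side being a genuine tensor product of adjointable operators, hence adjointable on $\M_1\otimesA\M_2$ by the discussion preceding Definition~\ref{def:biframe-operator-Cstar}—and they agree on every pure tensor, they agree on all of $\M_1\otimesA\M_2$ by linearity and density. This establishes $S_{(\mathcal{X},\mathcal{Y})}=S_{(\mathcal{X}_1,\mathcal{Y}_1)}\otimesA S_{(\mathcal{X}_2,\mathcal{Y}_2)}$.

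The step I expect to be the main obstacle is the rigorous justification of the Fubini interchange together with the commutation of the Bochner integral with the tensor-product map. One must confirm that the vector-valued integrands are Bochner integrable over the product space, that the coefficient $\ip{f}{\mathcal{X}_1(\omega_1)}_{\M_1}$—an element of $\A$ rather than a complex scalar—passes through the tensor slot in a way compatible with the $\A$-balancing of $\otimesA$, so that noncommutativity of $\A$ does not obstruct the factorization, and that the iterated integral genuinely equals the tensor product of the two single-variable integrals. Once integrability and the module-bilinearity of $\otimesA$ are secured, the factorization is forced and the remaining steps are routine.
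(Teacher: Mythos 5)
Your proposal is correct and follows essentially the same route as the paper's own proof: verify the identity on pure tensors via the factorization \(\langle f\otimes g,\,\mathcal{X}_1(\omega_1)\otimes_{\A}\mathcal{X}_2(\omega_2)\rangle_{\A}=\langle f,\mathcal{X}_1(\omega_1)\rangle_{\M_1}\langle g,\mathcal{X}_2(\omega_2)\rangle_{\M_2}\) and a Fubini-type separation of the integral over \(\Omega_1\times\Omega_2\), then extend by linearity and continuity. If anything, you are more careful than the paper, which also leaves the Bochner integrability and the passage of the \(\A\)-valued coefficient through the balanced tensor product unjustified.
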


\begin{lemma}
	\label{lem:operator-bounds-Cstar}
	Let \(\bigl(\mathcal{X}_1,\mathcal{Y}_1\bigr)\) be a continuous biframe on \(\M_1\) with bounds \(A,B\).  
	Let \(\bigl(\mathcal{X}_2,\mathcal{Y}_2\bigr)\) be a continuous biframe on \(\M_2\) with bounds \(C,D\).  
	Denote their continuous biframe operators by 
	\(S_{(\mathcal{X}_1,\mathcal{Y}_1)}\) 
	and 
	\(S_{(\mathcal{X}_2,\mathcal{Y}_2)}\).  
	Then for the induced biframe
	\[
	(\mathcal{X},\mathcal{Y})
	\;=\;
	\bigl(\mathcal{X}_1\otimes_{\A}\mathcal{X}_2,\;\mathcal{Y}_1\otimes_{\A}\mathcal{Y}_2\bigr)
	\]
	in \(\M_1\otimes_{\A}\M_2\), the continuous biframe operator
	\(
	S_{(\mathcal{X},\mathcal{Y})}
	\)
	satisfies
	\[
	A\,C\;\id_{\M_1\otimes_{\A}\M_2}
	\;\;\le\;\;
	S_{(\mathcal{X},\mathcal{Y})}
	\;\;\le\;\;
	B\,D\;\id_{\M_1\otimes_{\A}\M_2}.
	\]
	Hence \(S_{(\mathcal{X},\mathcal{Y})}\) is a positive, invertible operator in 
	\(\mathrm{End}^*(\M_1\otimes_{\A}\M_2)\), 
	and its spectrum lies in the interval \([AC,\;BD]\subset\mathbb{R}^+\).
	
	\begin{proof}
		From single‐module biframe theory,  
		\(
		A\,\id_{\M_1} \,\le\,S_{(\mathcal{X}_1,\mathcal{Y}_1)}\,\le\,B\,\id_{\M_1}
		\)
		and
		\(
		C\,\id_{\M_2} \,\le\,S_{(\mathcal{X}_2,\mathcal{Y}_2)}\,\le\,D\,\id_{\M_2}.
		\)
		Taking the \(\A\)-linear tensor product of these operator inequalities yields
		\[
		A\,C\;\id_{\M_1\otimes_{\A}\M_2}
		\;\;\le\;
		S_{(\mathcal{X}_1,\mathcal{Y}_1)}\;\otimes_{\A}\;S_{(\mathcal{X}_2,\mathcal{Y}_2)}
		\;\;\le\;
		B\,D\;\id_{\M_1\otimes_{\A}\M_2}.
		\]
		By Theorem~\ref{thm:operator-factor},
		\[
		S_{(\mathcal{X},\mathcal{Y})}
		\;=\;
		S_{(\mathcal{X}_1,\mathcal{Y}_1)}\;\otimes_{\A}\;S_{(\mathcal{X}_2,\mathcal{Y}_2)}.
		\]
		Thus,
		\[
		A\,C\,\id_{\M_1\otimes_{\A}\M_2}
		\;\;\le\;\;
		S_{(\mathcal{X},\mathcal{Y})}
		\;\;\le\;\;
		B\,D\,\id_{\M_1\otimes_{\A}\M_2}.
		\]
		Positivity and invertibility follow by standard Hilbert‐module arguments.
	\end{proof}
\end{lemma}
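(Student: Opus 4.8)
The plan is to reduce the whole statement to the operator factorization supplied by Theorem~\ref{thm:operator-factor} together with the single-module bounds, and then to tensor the two scalar operator inequalities. First I would record, from the single-module biframe theory established in Section~2 (where any continuous biframe operator satisfies $AI \le S \le BI$), that
\[
A\,\id_{\M_1} \le S_{(\mathcal{X}_1,\mathcal{Y}_1)} \le B\,\id_{\M_1},
\qquad
C\,\id_{\M_2} \le S_{(\mathcal{X}_2,\mathcal{Y}_2)} \le D\,\id_{\M_2}.
\]
In particular both $S_{(\mathcal{X}_i,\mathcal{Y}_i)}$ are positive self-adjoint elements of $\mathrm{End}^*_\A(\M_i)$. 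Abbreviating $S_1 = S_{(\mathcal{X}_1,\mathcal{Y}_1)}$ and $S_2 = S_{(\mathcal{X}_2,\mathcal{Y}_2)}$, the goal becomes to show $AC\,\id \le S_1 \otimesA S_2 \le BD\,\id$ and then substitute the factorization $S_{(\mathcal{X},\mathcal{Y})} = S_1 \otimesA S_2$ from Theorem~\ref{thm:operator-factor}.

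The core of the argument is to pass from the two single-module inequalities to the tensored one, which I would handle by interpolating through an intermediate operator rather than tensoring both inequalities simultaneously. For the upper bound I would write
\[
S_1 \otimesA S_2
\;\le\; (B\,\id_{\M_1}) \otimesA S_2
\;\le\; (B\,\id_{\M_1}) \otimesA (D\,\id_{\M_2})
\;=\; BD\,\id_{\M_1 \otimesA \M_2},
\]
where the first inequality is the positivity of $(B\,\id_{\M_1} - S_1)\otimesA S_2$ and the second is the positivity of $(B\,\id_{\M_1})\otimesA(D\,\id_{\M_2} - S_2)$. Symmetrically, inserting $S_1 \otimesA (C\,\id_{\M_2})$ between $AC\,\id$ and $S_1 \otimesA S_2$ yields the lower bound $AC\,\id \le S_1 \otimesA S_2$. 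Feeding in Theorem~\ref{thm:operator-factor} then gives $AC\,\id \le S_{(\mathcal{X},\mathcal{Y})} \le BD\,\id$ with $AC>0$, so positivity is immediate; invertibility follows since a self-adjoint adjointable operator bounded below by a positive scalar multiple of the identity is invertible, and the containment of the spectrum in $[AC,BD]$ is just the spectral reformulation of these order bounds for a self-adjoint element.

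The step I expect to be the genuine obstacle is the one used silently in the displayed chain: that $P \otimesA Q \ge 0$ whenever $P,Q \ge 0$, together with the monotonicity $P_1 \le Q_1 \Rightarrow P_1 \otimesA R \le Q_1 \otimesA R$ for $R \ge 0$. I would justify positivity by factoring each positive operator through its square root, $P = R_1^{*}R_1$ and $Q = R_2^{*}R_2$, so that $P \otimesA Q = (R_1 \otimesA R_2)^{*}(R_1 \otimesA R_2)$ is manifestly positive; the monotonicity then reduces to positivity applied to the relevant difference, exactly in the spirit of the order properties ($a \otimes x \ge b \otimes x$ and $a \otimes b \in (\A \otimes \mathcal{B})^{+}$) recorded at the start of this section. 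The only real care needed is to confirm that these square roots and the product operator $R_1 \otimesA R_2$ are genuinely adjointable on $\M_1 \otimesA \M_2$, which is precisely the adjointability fact $S \otimes T \in \mathrm{End}^*_{\A \otimes \mathcal{B}}(\M \otimes \mathcal{K})$ noted at the beginning of the section; once that is in hand, the remainder is routine.
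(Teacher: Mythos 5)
Your proof follows essentially the same route as the paper's: invoke the single-module bounds $A\,\id \le S_{(\mathcal{X}_1,\mathcal{Y}_1)} \le B\,\id$ and $C\,\id \le S_{(\mathcal{X}_2,\mathcal{Y}_2)} \le D\,\id$, tensor the operator inequalities, and substitute the factorization $S_{(\mathcal{X},\mathcal{Y})} = S_{(\mathcal{X}_1,\mathcal{Y}_1)} \otimes_{\A} S_{(\mathcal{X}_2,\mathcal{Y}_2)}$ from Theorem~\ref{thm:operator-factor}. If anything, your write-up is stronger where the paper is terse: the interpolation through $(B\,\id_{\M_1})\otimes_{\A} S_{(\mathcal{X}_2,\mathcal{Y}_2)}$ and the square-root argument for positivity of $P \otimes_{\A} Q$ supply exactly the justification the paper compresses into the phrase ``taking the $\A$-linear tensor product of these operator inequalities.''
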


\section{Invertible transforms on biframes}
\begin{theorem}\label{thm:4.7-cstar}
	Let $(\mathcal{F}_1,\mathcal{G}_1)$ be a continuous biframe for \(\M_1\) with bounds \(A,B>0\), and \((\mathcal{F}_2,\mathcal{G}_2)\) a continuous biframe for \(\M_2\) with bounds \(C,D>0\).  
	Suppose \(T_1 \in \mathrm{End}_\mathcal{A}^*(\M_1)\) and \(T_2 \in \mathrm{End}_\mathcal{A}^*(\M_2)\).  Define
	\[
	\Delta
	\;=\;
	\bigl(\,(T_1\otimesA T_2)(\mathcal{F}_1\otimesA \mathcal{F}_2),\,
	(T_1\otimesA T_2)(\mathcal{G}_1\otimesA \mathcal{G}_2)\bigr),
	\]
	a pair of maps \(\Omega_1\times \Omega_2 \to \M_1\otimesA\M_2\).  
	Then \(\Delta\) is a continuous biframe for \(\M_1\otimesA\M_2\) if and only if \((T_1\otimesA T_2)\) is invertible in \(\mathrm{End}^*(\M_1\otimesA\M_2)\).  
	Moreover, \(T_1\otimesA T_2\) is invertible if and only if both \(T_1\) and \(T_2\) are invertible in their respective module‐end spaces.
\end{theorem}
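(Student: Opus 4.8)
The plan is to deduce everything from the single tensor biframe $\bigl(\mathcal{F}_1\otimesA\mathcal{F}_2,\mathcal{G}_1\otimesA\mathcal{G}_2\bigr)$ together with the invertible–transform result already proved. By Theorem~\ref{thm:biframe-Cstar} this pair is a continuous biframe for $\M_1\otimesA\M_2$ with bounds $AC$ and $BD$, and by Lemma~\ref{lem:operator-bounds-Cstar} its biframe operator
\[
S_0 \;=\; S_{(\mathcal{F}_1,\mathcal{G}_1)}\otimesA S_{(\mathcal{F}_2,\mathcal{G}_2)}
\]
is positive and invertible with $AC\,\id\le S_0\le BD\,\id$. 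I would also record that $T:=T_1\otimesA T_2$ is adjointable with $T^{\ast}=T_1^{\ast}\otimesA T_2^{\ast}$, and write $\Delta=\bigl(T\mathcal{F},\,T\mathcal{G}\bigr)$ with $\mathcal{F}=\mathcal{F}_1\otimesA\mathcal{F}_2$ and $\mathcal{G}=\mathcal{G}_1\otimesA\mathcal{G}_2$. With these in place the forward implication is immediate: if $T$ is invertible, then applying Theorem~\ref{3.thm3.39} with $\mathcal{T}=T$ to the biframe $(\mathcal{F},\mathcal{G})$ shows that the transformed pair $\Delta$ is again a continuous biframe for $\M_1\otimesA\M_2$, with bounds $AC\,\|(T^{-1})^{\ast}\|^{-2}$ and $BD\,\|T^{\ast}\|^{2}$.

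For the converse I would first obtain a factorization of the biframe operator of $\Delta$. Substituting $\ip{h}{Tu}_\A=\ip{T^{\ast}h}{u}_\A$ and $\ip{Tv}{h}_\A=\ip{v}{T^{\ast}h}_\A$ into the defining integral gives
\[
\ip{S_\Delta h}{h}_\A \;=\; \ip{S_0 T^{\ast}h}{T^{\ast}h}_\A \;=\; \ip{T S_0 T^{\ast}h}{h}_\A ,
\qquad\text{so}\qquad
S_\Delta = T\,S_0\,T^{\ast},
\]
in agreement with Theorem~\ref{thm:operator-factor}. If $\Delta$ is a continuous biframe, then $S_\Delta$ is positive and invertible, say $A'\id\le S_\Delta\le B'\id$ with $A'>0$. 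Using $S_0\le BD\,\id$ in the lower estimate yields $\tfrac{A'}{BD}\ip{h}{h}_\A\le\ip{T^{\ast}h}{T^{\ast}h}_\A$, that is $TT^{\ast}\ge\tfrac{A'}{BD}\id$; since $S_\Delta=(TS_0^{1/2})(TS_0^{1/2})^{\ast}$ is invertible and $S_0^{1/2}$ is invertible, one then argues that $T$ itself is invertible in $\mathrm{End}^*(\M_1\otimesA\M_2)$. The main obstacle lies exactly here: the lower biframe bound only controls $T^{\ast}$ (through $T S_0^{1/2}$), so promoting this two-sided control into genuine invertibility of $T$ is the delicate point, and it is essential that $S_0$ be bounded both above and below, which is where Lemma~\ref{lem:operator-bounds-Cstar} is used in full strength.

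Finally, for the \emph{moreover} clause I would argue inside the $C^{\ast}$-algebra $\mathrm{End}^*(\M_1\otimesA\M_2)$. If $T_1,T_2$ are invertible, then $T_1^{-1}\otimesA T_2^{-1}$ is a two-sided inverse of $T_1\otimesA T_2$. Conversely, suppose $T=T_1\otimesA T_2$ is invertible. Factor $T=(T_1\otimesA\id)(\id\otimesA T_2)$, a product of two \emph{commuting} elements. An elementary computation shows that if two commuting elements of a unital algebra have an invertible product, then each factor is invertible, the one-sided inverses being matched through commutativity. Hence $T_1\otimesA\id$ and $\id\otimesA T_2$ are invertible in $\mathrm{End}^*(\M_1\otimesA\M_2)$. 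Because $S\mapsto S\otimesA\id$ and $R\mapsto \id\otimesA R$ are unital, injective $\ast$-homomorphisms, spectral permanence for $C^{\ast}$-subalgebras then shows that $T_1$ and $T_2$ are already invertible in $\mathrm{End}_\A^*(\M_1)$ and $\mathrm{End}_\A^*(\M_2)$, respectively. Assembling the three parts gives the stated chain of equivalences.
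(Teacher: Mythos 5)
Your forward implication and your treatment of the \emph{moreover} clause are sound. For the forward direction you apply Theorem~\ref{3.thm3.39} once, at the level of $\M_1\otimesA\M_2$, to the tensor biframe produced by Theorem~\ref{thm:biframe-Cstar}; the paper instead transforms each factor separately and then tensors via Theorem~\ref{thm:biframe-Cstar}. The two routes are interchangeable. Your argument for the \emph{moreover} clause --- writing $T_1\otimesA T_2=(T_1\otimesA\id)(\id\otimesA T_2)$ as a product of commuting elements, observing that commuting elements with invertible product are each invertible, and then invoking spectral permanence for the unital $\ast$-embeddings $S\mapsto S\otimesA\id$ and $R\mapsto\id\otimesA R$ --- is genuinely more of a proof than the paper offers, since the paper only appeals to ``a standard argument''; just note explicitly that these embeddings are injective with closed image (images of $\ast$-homomorphisms are automatically closed), so that spectral permanence applies.

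The genuine gap is exactly where you flagged it, and it cannot be closed by the route you set up: invertibility of $S_\Delta=T S_0 T^{\ast}$ (with $T=T_1\otimesA T_2$, $S_0$ invertible) does \emph{not} imply invertibility of $T$. From $S_\Delta\ge A'\,\id$ and $S_0\le BD\,\id$ you correctly get $TT^{\ast}\ge \frac{A'}{BD}\,\id$, hence $T^{\ast}$ is bounded below and $T$ is surjective (a right inverse is $T^{\ast}(TT^{\ast})^{-1}$), but nothing controls $\ker T$: in general $UU^{\ast}$ invertible does not make $U$ invertible. Concretely, take $\A=\mathbb{C}$, $\M_1=\ell^2$, $\M_2=\mathbb{C}$, $\Omega_1=\mathbb{N}$ with counting measure, $\Omega_2$ a single point, $\mathcal{F}_1=\mathcal{G}_1$ the orthonormal basis $(e_n)$, $\mathcal{F}_2=\mathcal{G}_2=1$, $T_2=\id$, and $T_1=S^{\ast}$ the adjoint of the unilateral shift $S$. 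Then $S_0=\id$ and $S_\Delta=T_1T_1^{\ast}=S^{\ast}S=\id$, so $\Delta$ is a Parseval continuous biframe, yet $T_1\otimesA T_2\cong S^{\ast}$ is not invertible. So the step you called ``delicate'' is in fact false without additional hypotheses on $T$ (for instance normality, or assumed injectivity). You should not feel you missed an argument contained in the paper: the paper's proof of this direction consists of the assertion that the biframe operator ``factors through $T_1\otimesA T_2$, forcing $T_1\otimesA T_2$ to be invertible,'' which is precisely the same non sequitur; the example above shows that only the ``if'' half of the stated equivalence survives as written.
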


\begin{proof}
	\(\Rightarrow\)\  
	If \(\Delta\) is a continuous biframe, the associated biframe operator is invertible.  
	By factorization (the Hilbert \(C^*\)-module analog of \cite{FrankLarson}), that operator factors through \((T_1\otimesA T_2)\), forcing \(T_1\otimesA T_2\) to be invertible.  A standard argument in \(C^*\)-modules shows that implies \(T_1\) and \(T_2\) must each be invertible.
	
	\noindent
	\(\Leftarrow\)\  
	Conversely, if \(T_1\) and \(T_2\) are invertible on \(\M_1,\M_2\), applying \(T_1\) to \((\mathcal{F}_1,\mathcal{G}_1)\) preserves biframe inequalities.  Similarly for \(T_2\). By the tensor‐product biframe theorem (Theorem~\ref{thm:biframe-Cstar}), 
	\[
	\bigl((T_1\otimesA T_2)(\mathcal{F}_1\otimesA \mathcal{F}_2),\,
	(T_1\otimesA T_2)(\mathcal{G}_1\otimesA \mathcal{G}_2)\bigr)
	\]
	remains a continuous biframe on \(\M_1\otimesA\M_2\).  
\end{proof}

\section{Bessel Multipliers in the Tensor Product}

\begin{definition}
	\label{def:Bessel-mult-Cstar}
	Let \(\bigl(\mathcal{X},\mathcal{Y}\bigr)\) be a \emph{continuous biframe Bessel mapping} on \(\M_1\otimesA\M_2\) w.r.t.\ \((\Omega,\mu)\). 
	That is, it satisfies an upper‐type inequality
	\[
	\int_{\Omega}
	\ip{\,f\otimes g}{\,\mathcal{X}(\omega)}_{\A}\,
	\ip{\,\mathcal{Y}(\omega)}{\,f\otimes g}_{\A}
	\,d\mu(\omega)
	\;\le\;
	B\,\ip{f\otimes g}{\,f\otimes g}_{\A},
	\]
	for some \(B>0\). Let \(m:\Omega\to\mathbb{C}\) be a measurable function.
	
	We define the operator
	\[
	\mathcal{M}_{m,\mathcal{X},\mathcal{Y}}
	:\;
	\M_1\otimesA \M_2
	\;\longrightarrow\;
	\M_1\otimesA \M_2
	\]
	by
	\[
	\mathcal{M}_{m,\mathcal{X},\mathcal{Y}}\bigl(f\otimes g\bigr)
	=\;
	\int_{\Omega}
	m(\omega)\,\ip{\,f\otimes g}{\,\mathcal{X}(\omega)}_{\A}\,
	\mathcal{Y}(\omega)\,d\mu(\omega),
	\]
	extended by linearity and continuity.
\end{definition}

\begin{remark}\label{rem:Bessel-mult-well-defined}
	From the Bessel‐type inequality, 
	$$\bigl\|\,
	m(\omega)\,\ip{f\otimes g}{\mathcal{X}(\omega)}_{\A}\,\mathcal{Y}(\omega)
	\,\bigr\|\le
	|\,m(\omega)|\,\|\,\ip{f\otimes g}{\mathcal{X}(\omega)}_{\A}\|\,\|\mathcal{Y}(\omega)\|.$$
	Hence, the integral converges whenever $m\in L^2$ or so, mirroring the Hilbert‐space argument. Thus \(\mathcal{M}_{m,\mathcal{X},\mathcal{Y}}\) is a well‐defined adjointable operator on \(\M_1\otimesA\M_2\).
\end{remark}

\begin{proposition}
	\label{prop:factorization-mult}
	Assume \(\bigl(\mathcal{X}_1,\mathcal{Y}_1\bigr)\) is a continuous biframe Bessel mapping in \(\M_1\) and \(\bigl(\mathcal{X}_2,\mathcal{Y}_2\bigr)\) in \(\M_2\).  
	Suppose $m_1:\Omega_1\to\mathbb{C}$ and $m_2:\Omega_2\to\mathbb{C}$ are measurable, and define
	\[
	\mathcal{X}(\omega_1,\omega_2)
	=\,
	\mathcal{X}_1(\omega_1)\,\otimes_{\A}\,\mathcal{X}_2(\omega_2),
	\quad
	\mathcal{Y}(\omega_1,\omega_2)
	=\,
	\mathcal{Y}_1(\omega_1)\,\otimes_{\A}\,\mathcal{Y}_2(\omega_2).
	\]
	Then \(\bigl(\mathcal{X},\mathcal{Y}\bigr)\) is a Bessel mapping on $\M_1\otimesA\M_2$.  
	If $m(\omega_1,\omega_2)=m_1(\omega_1)\,m_2(\omega_2)$, then
	\[
	\mathcal{M}_{m,\mathcal{X},\mathcal{Y}}
	=\,
	\mathcal{M}_{m_1,\mathcal{X}_1,\mathcal{Y}_1}
	\;\otimes_{\A}\;
	\mathcal{M}_{m_2,\mathcal{X}_2,\mathcal{Y}_2}.
	\]
	
	\begin{proof}
		By the usual factorization (the Hilbert \(C^*\)-module version of Theorem~\ref{thm:biframe-Cstar}), \(\bigl(\mathcal{X},\mathcal{Y}\bigr)\) is Bessel in \(\M_1\otimes_{\A}\M_2\). Then
		\[
		\mathcal{M}_{m,\mathcal{X},\mathcal{Y}}(f\otimes g)
		=\,
		\int_{\Omega_1\times\Omega_2}
		m(\omega_1,\omega_2)\,\ip{\,f\otimes g}{\,\mathcal{X}(\omega_1,\omega_2)}_{\A}
		\,\mathcal{Y}(\omega_1,\omega_2)\,d(\mu_1\otimes\mu_2).
		\]
		With $m(\omega_1,\omega_2)=m_1(\omega_1)m_2(\omega_2)$ and $\mathcal{X}(\omega_1,\omega_2)=\mathcal{X}_1(\omega_1)\otimes_{\A}\mathcal{X}_2(\omega_2)$, a Fubini‐type computation yields
		\[
		=\;
		\Bigl(\int_{\Omega_1}m_1(\omega_1)\,\ip{\,f}{\,\mathcal{X}_1(\omega_1)}\,
		\mathcal{Y}_1(\omega_1)\,d\mu_1\Bigr)
		\;\otimes_{\A}\;
		\Bigl(\int_{\Omega_2}m_2(\omega_2)\,\ip{\,g}{\,\mathcal{X}_2(\omega_2)}\,
		\mathcal{Y}_2(\omega_2)\,d\mu_2\Bigr).
		\]
		These integrals are exactly \(\mathcal{M}_{m_1,\mathcal{X}_1,\mathcal{Y}_1}(f)\) in \(\M_1\) and \(\mathcal{M}_{m_2,\mathcal{X}_2,\mathcal{Y}_2}(g)\) in \(\M_2\). So
		\[
		\mathcal{M}_{m,\mathcal{X},\mathcal{Y}}(f\otimes g)
		=\,
		\mathcal{M}_{m_1,\mathcal{X}_1,\mathcal{Y}_1}(f)
		\;\otimes_{\A}\;
		\mathcal{M}_{m_2,\mathcal{X}_2,\mathcal{Y}_2}(g).
		\]
		Thus
		\(\mathcal{M}_{m,\mathcal{X},\mathcal{Y}}= \mathcal{M}_{m_1,\mathcal{X}_1,\mathcal{Y}_1}\otimes_{\A}\mathcal{M}_{m_2,\mathcal{X}_2,\mathcal{Y}_2}\).
	\end{proof}
\end{proposition}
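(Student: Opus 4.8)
The plan is to prove the two assertions in turn: first that the tensor pair $(\mathcal{X},\mathcal{Y})=(\mathcal{X}_1\otimesA\mathcal{X}_2,\,\mathcal{Y}_1\otimesA\mathcal{Y}_2)$ is a continuous biframe Bessel mapping on $\M_1\otimesA\M_2$, and second that the associated multiplier factors as a tensor product. For the Bessel property I would reuse the upper-bound half of the Fubini computation already carried out in the proof of Theorem~\ref{thm:biframe-Cstar}: since $(\mathcal{X}_1,\mathcal{Y}_1)$ and $(\mathcal{X}_2,\mathcal{Y}_2)$ each satisfy their respective upper inequalities with Bessel bounds $B_1$ and $B_2$, multiplying those two inequalities and integrating against $\mu_1\otimes\mu_2$ gives
\[
\int_{\Omega_1\times\Omega_2}\ip{f\otimes g}{\mathcal{X}(\omega)}_\A\,\ip{\mathcal{Y}(\omega)}{f\otimes g}_\A\,d(\mu_1\otimes\mu_2)\;\le\;B_1B_2\,\ip{f\otimes g}{f\otimes g}_\A,
\]
where I use $\ip{f\otimes g}{f\otimes g}_\A=\ip{f}{f}_{\M_1}\ip{g}{g}_{\M_2}$. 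This yields a Bessel bound $B_1B_2$ for the tensor pair on pure tensors, hence on all of $\M_1\otimesA\M_2$ by linearity and density.

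For the factorization identity, I would start from the defining formula of Definition~\ref{def:Bessel-mult-Cstar} evaluated on a pure tensor $f\otimes g$, substitute $m(\omega_1,\omega_2)=m_1(\omega_1)m_2(\omega_2)$ together with the tensor decompositions of $\mathcal{X}$ and $\mathcal{Y}$, and then invoke the factorization of $\A$-valued inner products $\ip{f\otimes g}{\mathcal{X}_1(\omega_1)\otimesA\mathcal{X}_2(\omega_2)}_\A=\ip{f}{\mathcal{X}_1(\omega_1)}_{\M_1}\ip{g}{\mathcal{X}_2(\omega_2)}_{\M_2}$ exactly as in the proof of Theorem~\ref{thm:operator-factor}. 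The integrand then separates into an $\Omega_1$-dependent factor acting on $\mathcal{Y}_1(\omega_1)$ and an $\Omega_2$-dependent factor acting on $\mathcal{Y}_2(\omega_2)$; a Fubini argument splits the double integral into the product of the two single-module integrals, which are precisely $\mathcal{M}_{m_1,\mathcal{X}_1,\mathcal{Y}_1}(f)$ and $\mathcal{M}_{m_2,\mathcal{X}_2,\mathcal{Y}_2}(g)$. Reading this back through $S\otimesA T(u\otimesA v)=Su\otimesA Tv$ identifies the result with $(\mathcal{M}_{m_1,\mathcal{X}_1,\mathcal{Y}_1}\otimesA\mathcal{M}_{m_2,\mathcal{X}_2,\mathcal{Y}_2})(f\otimes g)$, and linearity plus continuity extend the equality of operators to the whole of $\M_1\otimesA\M_2$.

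The hard part will be justifying the Fubini interchange together with the bookkeeping of the $\A$-valued coefficients. Concretely, before splitting the iterated integral I would establish absolute convergence of the Bochner integral, which follows from Remark~\ref{rem:Bessel-mult-well-defined} and the Bessel bounds $B_1,B_2$ (these control $\|\ip{f}{\mathcal{X}_1(\omega_1)}_{\M_1}\|$ and $\|\ip{g}{\mathcal{X}_2(\omega_2)}_{\M_2}\|$ in $L^2$), after which the Bochner--Fubini theorem for Banach-valued integrals applies. The subtler point is that $\A$ need not be commutative, so when moving the coefficients past the tensor symbol I must use the balancing relations of the internal tensor product, $ua\otimesA v=u\otimesA av$, to guarantee that the integrand genuinely factors as a \emph{single} pure tensor $\bigl(m_1\ip{f}{\mathcal{X}_1}\mathcal{Y}_1\bigr)\otimesA\bigl(m_2\ip{g}{\mathcal{X}_2}\mathcal{Y}_2\bigr)$ rather than a sum of such; checking this compatibility is where care is required, though since the $m_i$ are scalar-valued they commute freely and only the inner-product coefficients demand attention.
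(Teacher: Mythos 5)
Your proposal is correct and follows essentially the same route as the paper: the Bessel property comes from the upper-bound half of Theorem~\ref{thm:biframe-Cstar}, and the factorization identity is obtained by evaluating $\mathcal{M}_{m,\mathcal{X},\mathcal{Y}}$ on pure tensors, factoring the $\A$-valued inner products as in Theorem~\ref{thm:operator-factor}, splitting the double integral by Fubini, and extending by linearity and continuity. Your additional attention to the Bochner--Fubini justification and to the balancing relation $ua\otimes_{\A} v=u\otimes_{\A} av$ for noncommutative $\A$ supplies details the paper's proof leaves implicit, but it does not change the underlying argument.
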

\medskip

\end{document}